\newtheorem{theorem}{Theorem}[section]
\newtheorem{lemma}[theorem]{Lemma}
\newtheorem{proposition}[theorem]{Proposition}
\theoremstyle{definition}
\newtheorem{definition}[theorem]{Definition}
\newtheorem{remark}[theorem]{Remark}
\newcommand{\R}{\mathbb{R}}
\newcommand{\Z}{\mathbb{Z}}
\newcommand{\C}{\mathbb{C}}
\newcommand{\aech}{\mathbb{H}}
\newcommand{\N}{\mathbb{N}}
\newcommand{\Q}{\mathbb{Q}}
\newcommand{\bs}{\boldsymbol}
\title[Weighted badly approximable vectors and bounded orbits]{Weighted badly approximable complex vectors and bounded orbits of certain diagonalizable flows}
\author{Gaurav Sawant}
\address{Department of Mathematics, The Institute of Science, 15, Madam Cama Road, Mumbai 400032, India}
\email{gauravsawant.math@gmail.com}
\date{}
\begin{document}

\begin{abstract}
We show an analogue of a theorem of An, Ghosh, Guan, and Ly \cite{AGGL} on weighted badly approximable vectors for totally imaginary number fields.  We show that for $G=\mathrm{SL}_2(\C)\times\dots\times\mathrm{SL}_2(\C)$ and $\Gamma<G$ a lattice subgroup, the points of $G/\Gamma$ with bounded orbits under a one-parameter Ad-semisimple subgroup of $G$ form a hyperplane-absolute-winning set.  As an application, we also provide a generalization of a result of Esdahl-Schou and Kristensen \cite{ESK} about the set of badly approximable complex numbers.
\end{abstract}

\maketitle

\section{Introduction}
The study of badly approximable numbers through topological games has been driven by the seminal work of Schmidt \cite{Sch22,Sch23} who showed that the set of real numbers badly approximable by the rationals is thick.  The study of diophantine approximation over complex numbers also has a long history.  Approximation of complex numbers by ratios of elements in the ring of integers of the quadratic number field $\Q(\sqrt{-D})$ has been studied in \cite{ESK}, building on the work of \cite{ASchmidt} and \cite{DK}.  Esdahl-Schou and Kristensen \cite{ESK} showed that the set 
\begin{equation}\label{1_Eq7.1}
 \mathbf{Bad}(K)=\left\{ z\in\C:\left|z-\dfrac{p}{q} \right|\geq \dfrac{\kappa(z)}{|q|^2}~~\text{for~all}~p,q\in\mathcal{O}_{K},~q\neq 0 \right\}
\end{equation}
of complex numbers badly approximable by ratios of elements in the ring of integers $\mathcal{O}_{K}$ in $K$ for $K=\Q(\sqrt{-D})$ has full Hausdorff dimension, where $D\in\{1,2,3,7,11,19,43,67,163 \}$.  From Stark \cite{Stark}, these are precisely those values of $D$ for which $\mathcal{O}_{\Q(\sqrt{-D})}$ is a UFD.  See \cite{Hines} for a characterization of $\mathbf{Bad}(\Q(\sqrt{-D}))$ in terms of continued fractions for $D=1,2,3,7,11$; these are precisely the values of $D$ for which $\mathcal{O}_D$ is a norm-Euclidean domain.

It is natural to look for generalizations of \cite{ESK} to {totally imaginary} number fields, which is one of the main results of this paper.
\begin{theorem}\label{1_T7.1}
Let $K$ be a totally imaginary number field of degree $n$ over $\Q$.  Then $\mathbf{Bad}(K)$ has full Hausdorff dimension.
\end{theorem}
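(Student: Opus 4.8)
The plan is to deduce Theorem~\ref{1_T7.1} from the dynamical theorem stated in the abstract by means of a Dani-type correspondence, and then to transfer the hyperplane-absolute-winning property from the homogeneous space down to $\C$. Since $K$ is totally imaginary, $n=2m$ where $m$ is the number of (complex) archimedean places; fixing representatives $\sigma_1,\dots,\sigma_m\colon K\hookrightarrow\C$ of the conjugate pairs of embeddings identifies $K\otimes_\Q\R$ with $\C^m$ and exhibits $\mathcal O_K$ as a cocompact lattice there, and we use $\sigma_1$ to view $\mathcal O_K\subset\C$ and to interpret $z-p/q$ for $z\in\C$. Put $G=\mathrm{SL}_2(\C)^m$ and let $\Gamma<G$ be the image of $\mathrm{SL}_2(\mathcal O_K)$ under the diagonal embedding $\gamma\mapsto(\sigma_1(\gamma),\dots,\sigma_m(\gamma))$, which is a non-uniform lattice by Borel--Harish-Chandra. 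Write $n_z=\left(\begin{smallmatrix}1&z\\0&1\end{smallmatrix}\right)$, $u_z=(n_z,I,\dots,I)\in G$, and let $\{a_t\}_{t\in\R}$ be the one-parameter diagonal subgroup whose $i$-th component is $\mathrm{diag}(e^{\lambda_i t},e^{-\lambda_i t})$, with weights chosen so that $\lambda_1>0$ (so that $u_z$ lies in the expanding horospherical subgroup of $\{a_t\}$), $\lambda_2,\dots,\lambda_m<0$, and so that the dynamics on $G/\Gamma$ encodes precisely the approximation condition defining $\mathbf{Bad}(K)$. This $\{a_t\}$ is $\R$-diagonalizable, hence Ad-semisimple, so the dynamical theorem is applicable to it.

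The heart of the argument is the Dani-type correspondence
\[
 z\in\mathbf{Bad}(K)\quad\Longleftrightarrow\quad \{a_t u_z\Gamma:t\ge 0\}\ \text{is bounded in }G/\Gamma .
\]
One implication is the routine one: by Mahler's compactness criterion, if the forward orbit exits a fixed compact set along a sequence $t_j\to\infty$, then there are nonzero $(p_j,q_j)\in\mathcal O_K^2$ with $a_{t_j}u_z\left(\begin{smallmatrix}-p_j\\ q_j\end{smallmatrix}\right)\to 0$ in $\C^{2m}$; reading off the coordinates, and using the product formula $\prod_i|\sigma_i(q)|^2=|N_{K/\Q}(q)|\ge 1$ together with the sign conditions on the $\lambda_i$ to rule out ``arithmetic'' short vectors at large times, one recovers from $(p_j,q_j)$ a sequence of approximations of $z$ by elements of $K$ witnessing $z\notin\mathbf{Bad}(K)$. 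I expect the other implication to be the main obstacle: one must show that a $z$ failing to be badly approximable has an unbounded forward orbit, i.e., turn a good approximation into a genuine excursion of the orbit toward a cusp of $G/\Gamma$. Over $\Q$ this is classical, but over a number field one has to contend with the $h_K$ distinct cusps of $G/\Gamma$ (described by the ideal class group via reduction theory for $\mathrm{SL}_2(\mathcal O_K)$) and with the action of $\mathcal O_K^\times$ on denominators; the geometry of numbers in $\mathcal O_K$ must be used to produce approximating pairs $p,q$ whose conjugates at the places $\sigma_2,\dots,\sigma_m$ are suitably controlled, and it is precisely these constraints that dictate the choice of the weights $\lambda_i$ and of the approximation functional in the definition of $\mathbf{Bad}(K)$.

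Granting the correspondence, the conclusion follows quickly. By the dynamical theorem, the set $\mathcal B=\{x\in G/\Gamma:\{a_t x:t\ge 0\}\ \text{is bounded}\}$ is hyperplane-absolute-winning on $G/\Gamma$. Fix $z_0\in\C$ and use the chart near $u_{z_0}\Gamma$ given by $(w,v)\mapsto\exp(w)\exp(v)\,u_{z_0}\Gamma$, with $w$ ranging over the line $\mathfrak{n}_1^{+}$ of strictly upper-triangular elements of the first copy of $\mathfrak{sl}_2(\C)$ and $v$ over a fixed complement of $\mathfrak{n}_1^{+}$ in $\mathfrak g$. Since $u_z=\exp(w(z-z_0))\,u_{z_0}$ with $w(\cdot)$ a linear parametrization of $\mathfrak{n}_1^{+}$, in these coordinates the map $z\mapsto u_z\Gamma$ is an affine parametrization of the coordinate plane $\mathfrak{n}_1^{+}\cong\C\cong\R^2$. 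Because being hyperplane-absolute-winning is a local property, is checkable in any $C^1$ chart, and passes to intersections with affine subspaces, the preimage of $\mathcal B$ under $z\mapsto u_z\Gamma$ --- which by the correspondence is exactly $\mathbf{Bad}(K)$ --- is hyperplane-absolute-winning near $z_0$, and hence, $z_0$ being arbitrary, on all of $\C$. Since every hyperplane-absolute-winning subset of $\R^2$ has Hausdorff dimension $2$, this proves that $\mathbf{Bad}(K)$ has full Hausdorff dimension.
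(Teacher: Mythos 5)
Your overall strategy---reduce to the dynamical theorem via a Dani-type correspondence---is the right one in spirit, but two steps of your argument are genuinely broken, and they are precisely the steps the paper is organized around. First, the correspondence you set up does not encode $\mathbf{Bad}(K)$. You place $z$ only in the first $\mathrm{SL}_2(\C)$-factor ($u_z=(n_z,I,\dots,I)$) and take weights $\lambda_1>0$, $\lambda_i<0$ for $i\ge 2$. Then for $(p,q)\in\mathcal O_K^2$ the relevant lattice vector has components $e^{-\lambda_i t}\sigma_i(q)$ for $i\ge 2$, which blow up as $t\to\infty$ unless $|\sigma_i(q)|$ is super-exponentially small in $t$; so cusp excursions at large times are governed by how small the conjugates $\sigma_i(q)$, $i\ge2$, can be made (a unit-group/geometry-of-numbers condition), not by the quantity $|q|\,|qz+p|$ defining $\mathbf{Bad}(K)$. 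The correct setup (\S3 and \S7 of the paper) embeds $z$ by the \emph{twisted diagonal} $z\mapsto(\sigma_1(z),\dots,\sigma_n(z))$ into the full expanding horospherical subgroup and uses the weight vector $e_1=(1,0,\dots,0)$ (zero, not negative, weights in the remaining coordinates); Lemma \ref{L7.1} then identifies the image of $\mathbf{Bad}(K)$ with $\mathbf{Bad}(K,e_1)$, and Proposition \ref{P3.4} supplies the correspondence. You also explicitly leave the harder implication of your correspondence unproven ("I expect the other implication to be the main obstacle"), so even granting a corrected setup the core of the argument is missing.

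Second, the transfer from $G/\Gamma$ down to $\C$ fails. The orbit $\{u_z\Gamma\}$ is a complex one-dimensional submanifold of a much larger space, and the HAW property does \emph{not} pass to intersections with affine subspaces or submanifolds: in the ambient game Player A may legally delete neighborhoods of hyperplanes containing the slice, so a winning strategy upstairs gives no information about the trace of the set on the slice. (Indeed, whether badly approximable sets remain winning on lines or curves is a hard problem in its own right, not a formal property of the game.) The paper never claims $\mathbf{Bad}(K)\subset\C$ is winning. Instead it proves the winning property directly in $\C^n$ for $\mathbf{Bad}(K,\mathbf r)$ (Proposition \ref{P3.5}, via the hyperplane-potential game and the counting Lemma \ref{L4.4}), and then extracts the dimension statement by a projection argument: since $\mathbf{Bad}(K,e_1)\subset\mathbf{Bad}(K)\times\C^{n-1}$ and $\dim_H\mathbf{Bad}(K,e_1)=2n$, subadditivity of Hausdorff dimension for products forces $\dim_H\mathbf{Bad}(K)=2$. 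You need either this product/projection argument or a genuinely new fibering argument to get from the $n$-dimensional statement to the one-dimensional one; as written, that step is a gap.
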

We will obtain Theorem \ref{1_T7.1} as a consequence of a general statement about weighted badly approximable complex vectors involving Schmidt games.  McMullen \cite{McM} introduced the notions of strong-winning and absolute-winning properties and showed that the set of badly approximable numbers is absolute-winning.  Given a number field $K$, there are various generalizations of $\mathbf{Bad}(K)$ in higher dimensions depending upon the real and complex embeddings of $K$.

Weighted badly approximable vectors have been extensively studied over the past few years by \cite{KL}, \cite{BFKRW}, \cite{EGL} among many others.  Schmidt \cite{Sch82} conjectured that any two sets of real two-dimensional weighted badly approximable vectors intersect nontrivially; it was proven in \cite{BPV} that these sets in fact have strong finite intersection property in the sense of full Hausdorff dimension.  Recently, \cite{Hattori} has explored relations between badly approximable systems of linear forms and geometry of products of symmetric spaces using \cite{Dani1}, \cite{EGL}, \cite{KL}. 

The theory of Diophantine approximation over number fields is closely related to homogeneous dynamics; see \cite{Ghosh} for a survey.  The work of Dani \cite{Dani1,Dani2} established a correspondence between badly approximable numbers and bounded orbits of diagonalizable flows on homogeneous spaces.  Analogues of McMullen's result over totally real number fields were studied in \cite{EGL,AGGL} where the concept of weighted approximation was employed to show that badly approximable sets are hyperplane-absolute-winning (HAW).  In this paper, we {extend these findings to the case of a totally imaginary number field $K$.  Denoting by $\mathbf{Bad}(K,\mathbf{r})$ the badly approximable complex vectors relative to $K$ with respect to the real weight vector $\mathbf{r}$ (See (\S3, Definition \ref{D3.2}) for a precise definition), we show that:
\begin{proposition}\label{PInt3.5}
 $\mathbf{Bad}(K,\mathbf{r})$ is hyperplane-absolute-winning (HAW) in $\C^n$.
\end{proposition}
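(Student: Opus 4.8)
The plan is to derive Proposition~\ref{PInt3.5} from the paper's main homogeneous-dynamics theorem --- that for $G=\mathrm{SL}_2(\C)\times\dots\times\mathrm{SL}_2(\C)$ and an arbitrary lattice $\Gamma<G$, the set of $x\in G/\Gamma$ whose forward orbit under a fixed one-parameter Ad-semisimple subgroup $\{a_t\}$ is bounded is HAW --- by realizing $\mathbf{Bad}(K,\mathbf r)$ as the pullback of such a bounded-orbit set under a smooth horospherical parametrization. First I would attach to $(K,\mathbf r)$ its dynamical avatar. Since $K$ is totally imaginary, all of its archimedean places $v_1,v_2,\dots$ are complex, and restriction of scalars gives $\mathrm R_{K/\Q}\mathrm{SL}_2$ whose real points are $\prod_j\mathrm{SL}_2(K_{v_j})\cong\mathrm{SL}_2(\C)\times\dots\times\mathrm{SL}_2(\C)=:G$; I take $\Gamma$ to be $\mathrm{SL}_2(\mathcal O_K)$ embedded diagonally through the $v_j$, a lattice in $G$ by Borel--Harish-Chandra; and I take $\{a_t\}$ to be the diagonal subgroup whose $j$-th coordinate is $\mathrm{diag}(e^{r_j t},e^{-r_j t})$, with the positive weights $r_j$ read off from $\mathbf r$ (normalised so that $\sum_j r_j$ is the constant dictated by unimodularity). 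For $\mathbf z=(z_j)\in\C^n$ put $u_{\mathbf z}=\bigl(\begin{smallmatrix}1&z_j\\0&1\end{smallmatrix}\bigr)_j\in G$; as the $r_j$ are positive, $u_{\mathbf z}$ sweeps out the expanding horospherical subgroup $U^+$ of $\{a_t\}$.

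Second I would establish the Dani correspondence in this setting: $\mathbf z\in\mathbf{Bad}(K,\mathbf r)$ if and only if $\{a_t\,u_{\mathbf z}\,\Gamma:t\ge 0\}$ is bounded in $G/\Gamma$. Unwinding Definition~\ref{D3.2} through Minkowski's embedding of $\mathcal O_K$ into $\prod_j K_{v_j}$, the Diophantine inequality defining $\mathbf{Bad}(K,\mathbf r)$ --- a uniform lower bound, over all $q\in\mathcal O_K\setminus\{0\}$ and $\mathbf p\in\mathcal O_K^{\,n}$, for the weighted quantity built from the $|qz_j-p_j|_{v_j}$ and the norm of $q$ --- says precisely that the unimodular lattices $a_t\,u_{\mathbf z}\Lambda_0$, where $\Lambda_0$ is the Minkowski image of $\mathcal O_K^2$, keep their systole (length of a shortest nonzero vector) bounded below uniformly in $t\ge 0$; by Mahler's compactness criterion, together with the reduction theory describing the finitely many cusps of $G/\Gamma$, this is equivalent to boundedness of the trajectory. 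This is the exact analogue of Dani's correspondence \cite{Dani1} and of its number-field refinements in \cite{EGL,AGGL}; the only extra bookkeeping is the presence of several complex places and the normalisation involving the norm, which I would carry out place by place.

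Third I would transfer the HAW property through the correspondence. On every sufficiently small ball the map $\mathbf z\mapsto u_{\mathbf z}\Gamma$ is a $C^1$ (indeed real-analytic) diffeomorphism onto its image in $G/\Gamma$, realizing $\C^n$ as (an open piece of) the $U^+$-orbit of the identity coset; since HAW is a local notion and is preserved under $C^1$ diffeomorphisms of the ambient space (at small scales such a map is affine up to lower order, so the image of an $\varepsilon$-neighbourhood of a hyperplane lies inside a $C\varepsilon$-neighbourhood of a hyperplane, a device exploited in \cite{AGGL}), the preimage of a HAW set under this parametrisation is again HAW; and as $\mathbf{Bad}(K,\mathbf r)$ is invariant under $\mathcal O_K^{\,n}$-translation, working on a bounded fundamental domain covered by finitely many such charts suffices. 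Feeding the $G,\Gamma,\{a_t\}$ above into the dynamical theorem and pulling back along $\mathbf z\mapsto u_{\mathbf z}\Gamma$ then yields that $\mathbf{Bad}(K,\mathbf r)$ is HAW in $\C^n$.

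I expect essentially all of the difficulty to sit in the dynamical theorem rather than in this deduction. There, proving that Bob wins the hyperplane absolute game for the bounded-orbit set rests on a quantitative estimate of the following shape: within any geometric time window, the set of horospherical parameters whose trajectory dips deeply into a cusp of $G/\Gamma$ is covered by a bounded number of $\varepsilon$-neighbourhoods of (complex) affine hyperplanes --- the hyperplanes being those cut out by the primitive vectors of $\mathcal O_K^2$ responsible for the excursion --- which Bob can then avoid simultaneously, scale by scale. Controlling the contraction along the stable directions of $\{a_t\}$, the non-escape along its central directions, and the interaction of the finitely many $\mathrm{SL}_2(\C)$-factors in the product, and then summing the resulting errors over the nested windows, is where the real work and the main obstacle lie.
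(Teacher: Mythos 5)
Your argument runs in the opposite logical direction from the paper's, and within this paper that direction is both circular and mathematically gapped. Here Proposition \ref{PInt3.5} (restated as Proposition \ref{P3.5}) is the \emph{input} to the dynamics: it is proved first, by a direct Diophantine argument, and only then is the bounded-orbit statement (Proposition \ref{P3.1}, and ultimately Theorem \ref{T1.1_AGGL}) deduced from it via the Dani correspondence (Proposition \ref{P3.4}) together with the local product structure of $G/\Gamma$ along the expanding horospherical leaf. Deriving Proposition \ref{PInt3.5} from Theorem \ref{T1.1_AGGL} therefore assumes exactly what that theorem rests on. Moreover, even granting the dynamical theorem from an independent source, your transfer step fails: the image of $\mathbf{z}\mapsto u_{\mathbf{z}}\Gamma$ is a proper complex submanifold of $G/\Gamma$ (complex dimension $n$ inside $3n$), and the HAW property of a subset of $G/\Gamma$ does not restrict to the HAW property of its trace on a positive-codimension submanifold --- that trace could a priori even be empty. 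The valid implication is the one the paper (and \cite{AGGL}, and Proposition \ref{T1.1_single} here) uses: winning on the horospherical leaf propagates to winning on the whole space, by projecting Player B's balls onto the expanding directions and pulling Player A's hyperplane neighbourhoods back; not the converse.

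The substantive content of Proposition \ref{PInt3.5} --- which your sketch defers entirely to the unproven dynamical theorem --- is the Diophantine game argument of \S4: the description $\mathbf{Bad}(K,\mathbf{r})=\bigcup_{\varepsilon>0}\mathbf{Bad}_\varepsilon(K,\mathbf{r})$ as a complement of the boxes $\Delta_\varepsilon(p,q)$, the stratification of the denominators $q\in\mathcal{O}_{K}(\mathbf{r},\varepsilon)$ by height into the classes $\mathscr{P}_{m+l,l}$, and above all Lemma \ref{L4.4}: for a ball $B\in\mathscr{B}_m$, all pairs $(p,q)$ with $q\in\mathscr{P}_{m+l,l}$ whose box meets $B$ share a single ratio $p/q$, proved by playing the lower bound $|N(p_1q_2-p_2q_1)|\geq c_K$ for the totally imaginary field against the smallness of the boxes. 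This is what allows Player A to neutralize each stratum with a single hyperplane neighbourhood of controlled radius in the hyperplane-potential game, after which the equivalence of HPW and HAW \cite[Theorem C.8]{FSU} finishes the proof. The cusp-excursion covering estimate you gesture at in your last paragraph is essentially this counting lemma in dynamical clothing, but as written it is asserted rather than proved; it is the step you would have to supply.
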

The above proposition is proved using a direct extension of the technique employed in \cite{AGGL}.  This in turn allows us to} investigate an analogue of \cite[Theorem 1.1]{AGGL} for products of finitely many copies of $\mathrm{SL}_2(\C)$, stated as below:
\begin{theorem}\label{T1.1_AGGL}
 Let $G=\mathrm{SL}_2(\C)\times\dots\times\mathrm{SL}_2(\C)$ be a product of $n$ copies of $\mathrm{SL}_2(\C)$ and let $\Gamma$ be a lattice subgroup of $G$.  Then for any one-parameter $Ad-$semisimple subgroup $F=\{g_t:t\in\R \}$ of $G$, the set 
 \[E(F):=\{x\in G/\Gamma\,:\,Fx~\text{is~bounded}\} \]
 is Hyperplane-Absolute-Winning (HAW).
\end{theorem}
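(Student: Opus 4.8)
The plan is to follow the strategy of [AGGL, Theorem 1.1], reducing the statement about $G/\Gamma$ to the already-established HAW-property of weighted badly approximable complex vectors (Proposition PInt3.5). Since the HAW property is stable under countable intersections and under $C^1$ diffeomorphisms, and since $G/\Gamma$ is covered by finitely many charts in which everything is analytic, it suffices to work locally. First I would handle the case in which $\Gamma$ is irreducible, since a lattice in a product decomposes (after passing to a finite-index subgroup and rearranging factors) into a product of irreducible lattices, and the HAW property on a product follows from the HAW property on each factor block; the diagonalizable flow $F$ projects to each block, and if it projects trivially to some block then $E(F)$ restricted to that block is everything. So assume $\Gamma < G$ is irreducible.

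Next I would classify the one-parameter $\mathrm{Ad}$-semisimple subgroups $F = \{g_t\}$ of $G = \mathrm{SL}_2(\C)^n$ up to conjugacy. Writing $g_t = (g_t^{(1)},\dots,g_t^{(n)})$, each coordinate $g_t^{(j)}$ is a one-parameter subgroup of $\mathrm{SL}_2(\C)$ whose elements are semisimple, hence conjugate to either $\operatorname{diag}(e^{\lambda_j t}, e^{-\lambda_j t})$ for some $\lambda_j \in \C$, or trivial. The coordinates where $\lambda_j = 0$ contribute a bounded (hence irrelevant) factor to the orbit, so after discarding them we may assume every $\lambda_j \neq 0$. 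The real parts $\operatorname{Re}(\lambda_j)$ determine the expanding/contracting behaviour; the imaginary parts give a rotational component that does not affect boundedness. The relevant expanding horospherical subgroup $U^+$ is then a product of the upper- (or lower-) triangular unipotent subgroups in the expanding coordinates, which is isomorphic as a real manifold to $\C^m$ for the appropriate $m$, and the weight vector $\mathbf{r}$ is read off from the ratios of the $\operatorname{Re}(\lambda_j)$.

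The core of the argument is then a Dani-type correspondence: a point $x \in G/\Gamma$ has bounded $F$-orbit if and only if it does not diverge in the cusp, and using the structure of $G/\Gamma$ near a point together with non-divergence/quantitative-recurrence estimates for the diagonalizable flow, one shows that the set of $x$ in a given unstable leaf $U^+ x_0$ with bounded forward and backward orbit corresponds — under the natural identification of $U^+ x_0$ with a neighbourhood in $\C^m$ — to (a translate/rescaling of) the set $\mathbf{Bad}(K,\mathbf{r})$, or more precisely to a set containing such a set up to a $C^1$-bi-Lipschitz change of coordinates, for a suitable number field $K$ and weights $\mathbf{r}$ extracted above. Here the irreducibility of $\Gamma$ is what forces the arithmetic structure (the number field) to appear, via commensurability with an arithmetic lattice and Borel–Harish-Chandra. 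Applying Proposition PInt3.5 on each unstable leaf, and then using the HAW transference along the stable and central directions exactly as in [AGGL] (the HAW property on $G/\Gamma$ follows from the HAW property on each unstable leaf together with a fibered/Fubini-type lemma for absolute winning sets), yields that $E(F)$ is HAW in $G/\Gamma$.

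The main obstacle I anticipate is the passage from the abstract leafwise statement to the concrete identification with $\mathbf{Bad}(K,\mathbf{r})$: one must match the complex geometry of the unipotent leaf in $\mathrm{SL}_2(\C)^n / \Gamma$ with the Diophantine condition defining $\mathbf{Bad}(K,\mathbf{r})$, control the distortion so that it is genuinely a $C^1$ (bi-Lipschitz) change of coordinates preserving HAW, and deal uniformly with the non-reductive/mixed cases where $F$ has both expanding and non-expanding complex coordinates and where $U^+$ is only part of the full horospherical group. Handling the rotational (imaginary-part) components carefully — showing they contribute only a bounded, HAW-irrelevant twist — and ensuring the argument degrades gracefully when some $\lambda_j$ vanish, are the secondary technical points. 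Everything else (stability of HAW under countable intersection, $C^1$ maps, and fiber products) is standard and quotable from [McM] and [AGGL].
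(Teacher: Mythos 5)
Your overall strategy coincides with the paper's: reduce to an irreducible non-cocompact lattice via the decomposition of lattices in products and the product/intersection stability of HAW; invoke Margulis arithmeticity and restriction of scalars to identify $\Gamma$ up to commensurability with $\mathrm{Res}_{K/\Q}\mathrm{SL}_2(\Z[i])$; split $F$ by Jordan--Chevalley into a diagonalizable part conjugate to $F_{\mathbf r}$ and a bounded part that does not affect $E(F)$; and transfer the HAW property of $\mathbf{Bad}(K,\mathbf r)$ to $E(F_{\mathbf r})$ via a Dani-type correspondence (the paper does this with Mahler's criterion applied to the proper embedding $G/\Gamma\hookrightarrow\mathrm{SL}_{2n}(\C)/\mathrm{SL}_{2n}(\Z[i])$) together with the leafwise/fibered argument along the non-expanding directions.

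There is, however, one genuine gap: you repeatedly lean on the invariance of HAW under $C^1$ diffeomorphisms ("$C^1$-bi-Lipschitz change of coordinates", "stability of HAW under \dots $C^1$ maps \dots standard and quotable from [McM] and [AGGL]"). That invariance is a theorem about the \emph{real} hyperplane game, where the targets removed by Player A are neighborhoods of real affine hyperplanes; it is quotable from [BFKRW] and [AGGL] only in that setting. Here the game is played in $\C^n$ with \emph{complex} hyperplanes, and a general $C^1$ (even real-analytic) diffeomorphism of $\R^{2n}$ does not map complex hyperplanes to sets well-approximated by complex hyperplanes, so the cited results do not apply. What saves the argument is that every change of coordinates actually needed (left translation by $h\in G$, the exponential charts on $G/\Gamma$, the identification of the unstable leaf with $\C^m$) is biholomorphic, and derivatives of biholomorphisms are complex-linear, hence do carry complex hyperplanes to complex hyperplanes infinitesimally. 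But this biholomorphic invariance is not in the literature you cite and must be proved; it is precisely the content of the paper's \S5 (Proposition \ref{L214.223}), which redoes the distortion estimates of [BFKRW, Theorem 2.4] in the holomorphic category. Your plan should replace every appeal to $C^1$-invariance by this statement, and verify that each coordinate change you use is in fact holomorphic. The remaining vaguenesses (which nondivergence statement underlies the Dani correspondence, how the rotational parts of the $\lambda_j$ are absorbed into $F_{\mathrm{bdd}}$) are filled in exactly as you anticipate and are not gaps.
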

The proof of the above theorem is achieved by an appropriate variation of the method used in \cite{AGGL} as predicted in \cite[\S1, Remarks 1, 2]{AGGL}.  Theorem \ref{T1.1_AGGL} verifies a conjecture of An, Guan, and Kleinbock \cite{AGK} in the case of $G=\mathrm{SL}_2(\C)\times\dots\times\mathrm{SL}_2(\C)$, $\Gamma=\mathrm{SL}_2(\mathcal{O}_{K})$ with $K$ being a totally imaginary field over $\Q$.  This conjecture is already verified for $G=\mathrm{SL}_3(\R)$ and $\Gamma=\mathrm{SL}_3(\Z)$ in \cite{AGK} and for $G=\mathrm{SL}_2(\R)\times\dots\times\mathrm{SL}_2(\R)$ in \cite{AGGL}; see also \cite{KL} and \cite{EGL} for earlier results.

We closely follow the strategy employed in \cite{AGGL}.  In \S2, we describe the hyperplane-absolute and the hyperplane-potential variants of the complex manifold version of Schmidt's game introduced in \cite{Sch22}.  We consider the special case of Theorem \ref{T1.1_AGGL} with a totally imaginary field $K$ of degree $n$ over $\Q$, $G=\prod_{j=1}^n \mathrm{SL}_2(\C)$, $\Gamma=\mathrm{SL}_2(\mathcal{O}_{K})$, and $F$ the diagonal flow in the left-multiplicative action of $G$ on $G/\Gamma$ in \S3.  The set $E(F)$ in Theorem \ref{T1.1_AGGL} is identified with weighted badly approximable complex vectors by an appropriate analogue of Dani correspondence.  The main input in the proof of Theorem \ref{T1.1_AGGL} is the alternate formulation of the set of weighted badly approximable complex vectors obtained in \S4.  In \S5, we show that the HAW property is preserved by biholomorphisms of complex manifolds by proving a holomorphic analogue of Proposition 2.3(c) of \cite{BFKRW} and Lemma 2.2(4) of \cite{AGGL}.  We combine all of these arguments in \S6 to complete the proof of Theorem \ref{T1.1_AGGL} along the lines of \cite[\S5]{AGGL}.  In \S7, we prove Theorem \ref{1_T7.1} as a consequence of Theorem \ref{T1.1_AGGL}; this is facilitated by Lemma \ref{L7.1} which allows us to apply Theorem \ref{T1.1_AGGL} to the twisted diagonal embedding of $\mathbf{Bad}(K)$ into $\C^n$.

\section{Schmidt's game and its variants in \texorpdfstring{$\C^n$}{Cn}}
Schmidt's game \cite{Sch22,Sch23} is a two-player topological game played on a Euclidean space.  The players, A and B, choose closed balls $\{A_j\}_{j\in\N}$ and $\{B_j\}_{j\in\N}$, respectively, according to a predetermined rule, in succession so that $\{B_j\}_{j\in\N}$ form a nested sequence.  Player A wins the game if $\bigcap_{j\in\N} B_j$ intersects a prescribed target set $E$; else, Player B wins.

Two of the variants of Schmidt's game are the hyperplane-absolute (HA) game and the hyperplane-potential (HP) game.  The HA game was introduced in \cite{BFKRW} and explored further in \cite{KW17}, and the HP game was studied in \cite{FSU}, both for real Euclidean spaces $\R^n$.  We provide the $\C^n$ version of these games below:

\subsection{Hyperplane-absolute (HA) game in \texorpdfstring{$\C^n$}{Cn}}{\label{SecHAW}}
Consider the Euclidean space $\C^n$.  {The complex hyperplanes in $\C^n$ are affine subspaces of complex dimension $n-1$.  (If $\C^n$ is viewed as $\R^{2n}$, such a complex hyperplane is the intersection of two real affine hyperplanes.)} For each {complex} hyperplane $\mathcal{H}$ and a $\delta>0$, call {the set} 
\[ \mathcal{H}^{(\delta)}:=\{\mathbf{z}\in\C^n:\inf_{\mathbf{w}\in \mathcal{H}}\|\mathbf{z}-\mathbf{w}\|<\delta\}. \]
{the $\delta$-neighborhood of $\mathcal{H}$.}
For $0<\beta<\frac{1}{3}$, the $\beta$-hyperplane-absolute game has the following ruleset:
\begin{itemize}
 \item The game begins with Player B's initial closed ball $B_0\subset\C^n$ of radius $\rho=\rho_0$.
 \item In the $j^{\text{th}}$ move, Player B has chosen a closed ball $B_j$ of radius $\rho_j$, while Player A selects a hyperplane neighborhood $\mathcal{H}_j^{(\delta_j)}$ with $\delta_j\leq\beta\rho_j$.
 \item Player B must choose a closed ball $B_{j+1}\subset B_j\setminus \mathcal{H}_j^{(\delta_j)}$ of radius $\rho_{j+1}\geq\beta\rho_j$ as their $(j+1)^{\text{th}}$ move.
\end{itemize}
The above procedure generates a sequence of nested closed balls $\{B_j\}_{j\in\N}$.

A subset $E\subset\C^n$ is said to be $\beta$-hyperplane-absolute-winning (i.e. $\beta$-HAW) if Player A can ensure that $E\bigcap\left(\cap_{j\in\N}B_j\right)\neq\emptyset$ for every possible sequence of moves $\{B_j\}_{j\in\N}$ by Player B.  Furthermore, $E$ is called hyperplane-absolute-winning (i.e. HAW) if it is $\beta$-HAW for any $0<\beta<\frac{1}{3}$.

We refer to \cite{Wells} for the terminology regarding complex manifold theory.  The basic properties of $\beta$-HAW subsets and HAW subsets of $\C^n$ are similar to those for $\R^n$ described in (\cite{BFKRW},\cite{KW17}), which we list in the lemma below (see also \cite[Lemma 2.1]{AGGL}).

\begin{lemma}\label{L2.1} \hfill
 \begin{enumerate}
  \item A HAW subset of $\C^n$ is always $\frac{1}{2}$-winning.
  \item If $0<\beta'\leq\beta<\frac{1}{3}$, then any $\beta'$-HAW set is also $\beta$-HAW.
  \item If $\{E_j\}_{j\in\N}$ are HAW sets, then so is $\bigcap_{j\in\N}E_j$.
 \end{enumerate}
\end{lemma}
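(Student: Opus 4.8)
The plan is to deduce all three items from the corresponding facts about hyperplane-absolute-winning subsets of real Euclidean space established in \cite{BFKRW,KW17} (see also \cite[Lemma~2.1]{AGGL}); the only point needing attention is the passage from real to complex hyperplanes. The relevant observation is that a complex hyperplane $\mathcal H\subset\C^n\cong\R^{2n}$ has real codimension $2$, hence lies inside a real affine hyperplane $\widetilde{\mathcal H}$ with $\mathcal H^{(\delta)}\subseteq\widetilde{\mathcal H}^{(\delta)}$ for all $\delta>0$; consequently, for a closed ball $B\subset\C^n$ of radius $\rho$ and $0<\delta<\rho$, the set $B\setminus\mathcal H^{(\delta)}$ contains $B\setminus\widetilde{\mathcal H}^{(\delta)}$ and hence a closed ball of radius $(\rho-\delta)/2$. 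This is precisely the packing estimate that drives the real theory, so the threshold $\tfrac13$ and all of the strategy-transfer arguments of \cite{BFKRW,KW17} carry over to $\C^n$ essentially verbatim; as usual one may assume, along any play under consideration, that $\rho_j\to0$, so that $\bigcap_jB_j$ is a single point.

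For (1), I would follow the argument of \cite[Proposition~2.3]{BFKRW}: a winning strategy for Player A in the complex $\beta$-HA game (valid for all $\beta<\tfrac13$) is converted into a winning strategy for Player A in Schmidt's classical game witnessing that $E$ is $\tfrac12$-winning. Whenever Player B plays a ball $B_j$ of radius $\rho_j$, Player A consults the HA strategy, takes the prescribed hyperplane neighbourhood $\mathcal H_j^{(\delta_j)}$ with $\delta_j$ small, and --- using the estimate above --- inscribes in $B_j\setminus\mathcal H_j^{(\delta_j)}$ the sub-ball that Schmidt's rules demand; the simulated moves form a legal HA play, so the point of $\bigcap_jB_j$ produced lies in $E$. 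For (2), a winning strategy for Player A in the $\beta'$-HA game is already a winning strategy in the $\beta$-HA game whenever $\beta'\le\beta<\tfrac13$, because every legal play of the $\beta$-game is at the same time a legal play of the $\beta'$-game: an A-move $\mathcal H_j^{(\delta_j)}$ with $\delta_j\le\beta'\rho_j$ automatically has $\delta_j\le\beta\rho_j$, while a B-move with $\rho_{j+1}\ge\beta\rho_j$ automatically has $\rho_{j+1}\ge\beta'\rho_j$.

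For (3) --- the only item where real care is needed --- I would use the standard interleaving argument. Fix $\beta<\tfrac13$ and an ordering of the moves that addresses each set $E_k$ infinitely often but with gaps bounded by some integer $M_k$ (for instance the ruler sequence, which addresses $E_k$ with period $2^k$, so $M_k=2^k$). On the moves devoted to $E_k$, Player A plays according to a winning strategy for the $\beta^{M_k}$-HA game for $E_k$ --- which exists precisely because $E_k$ is HAW, hence in particular $\beta^{M_k}$-HAW --- feeding that strategy only the subsequence of balls occurring at the $E_k$-moves, regarded as a fictitious play. The gap bound $M_k$ makes each fictitious B-move legal for the $\beta^{M_k}$-game (between any two consecutive $E_k$-moves the radius of Player B's ball gets multiplied by at least $\beta^{M_k}$), and each A-move it prescribes is legal in the actual $\beta$-game since $\beta^{M_k}\le\beta$. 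Because the $E_k$-moves are cofinal in $\N$, the fictitious intersection equals $\bigcap_jB_j$, which is a single point lying, by construction, in every $E_k$, hence in $\bigcap_kE_k$. The one real obstacle is the bookkeeping --- designing the bounded-gap interleaving and keeping the fictitious and actual plays aligned --- and it is handled exactly as in \cite{BFKRW,KW17}; alternatively, (3) is immediate from the equivalence, proved there, between the HA game and the hyperplane-potential game, whose additive structure makes countable intersections transparent.
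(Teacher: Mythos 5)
Your proposal is correct and matches the paper's treatment: the paper states this lemma without proof, simply asserting that the properties of HAW subsets of $\C^n$ follow as in the real case of \cite{BFKRW,KW17} (cf. \cite[Lemma 2.1]{AGGL}), which is exactly the route you take. Your added observation that a complex hyperplane neighbourhood $\mathcal H^{(\delta)}$ is contained in a real hyperplane neighbourhood $\widetilde{\mathcal H}^{(\delta)}$, so that the real packing estimates and the threshold $\tfrac13$ carry over verbatim, is precisely the (omitted) point that justifies the paper's assertion.
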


Similar to extending the idea of HAW sets to the corresponding subsets of $C^1$ manifolds done in \cite{KW17}, one can describe HAW subsets of complex manifolds as follows:
\begin{itemize}
 \item Given an open subset $\Omega\subset\C^n$, one defines the HA game on $\Omega$ with the added condition that Player B makes their initial move ($B_0$) within $\Omega$.
 \item For a complex $n$-manifold $M$ with a holomorphic system $\{(U_\alpha,\phi_\alpha)\}$ giving a complex structure to $M$, a subset $E\subset M$ is called HAW on $M$ if $\phi_\alpha(E\cap U_\alpha)$ is HAW on $\phi_\alpha(U_\alpha)$ for each $\alpha$.
\end{itemize}
We can list the fundamental properties of HAW subsets of complex manifolds as below (see also \cite[Lemma 2.2]{AGGL}).
\begin{lemma}\label{L2.2} \hfill
 \begin{enumerate}
  \item HAW subsets of complex manifolds have full Hausdorff dimension.
  \item If $\{E_j\}_{j\in\N}$ are HAW subsets of a complex manifold, then so is $\bigcap_{j\in\N} E_j$.
  \item Let $M$ be a complex manifold with an open cover $\{U_\alpha\}$.  Then $E\subset M$ is HAW if and only if $E\cap U_\alpha$ is HAW in $U_\alpha$ for each $\alpha$.
  \item Let $\{M_j\}_{j=1}^n$ be a finite collection of complex manifolds, and let $E_j\subset M_j$ be HAW for each $1\leq j\leq n$.  Then $\prod_{j=1}^nE_j$ is HAW in $\prod_{j=1}^n M_j$.
 \end{enumerate}
\end{lemma}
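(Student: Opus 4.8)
The four assertions are the $\C^n$/complex-manifold counterparts of the real statements in \cite[Lemma 2.2]{AGGL} and \cite{KW17}, and the plan is to follow that blueprint, the only genuinely new inputs being that complex hyperplanes behave well under coordinate projections and under biholomorphisms. For \emph{part (1)} I would argue chart by chart: by Lemma \ref{L2.1}(1) each $\phi_\alpha(E\cap U_\alpha)$ is $\tfrac{1}{2}$-winning for Schmidt's game on the open subset $\phi_\alpha(U_\alpha)$ of $\C^n\cong\R^{2n}$, hence non-empty and of Hausdorff dimension $2n$ by Schmidt's classical dimension bound \cite{Sch22,Sch23}; since chart maps and transition maps are holomorphic, hence locally bi-Lipschitz, and a complex manifold is second countable, $\dim_H E=\sup_\alpha\dim_H(E\cap U_\alpha)=2n=\dim_\R M$, which is full. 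For \emph{part (2)}, in a chart one has $\phi_\alpha\bigl((\bigcap_j E_j)\cap U_\alpha\bigr)=\bigcap_j\phi_\alpha(E_j\cap U_\alpha)$ with each factor HAW on $\phi_\alpha(U_\alpha)$, so the claim reduces to the countable-intersection property for HAW subsets of open subsets of $\C^n$, i.e.\ to Lemma \ref{L2.1}(3), whose proof (interleaving winning strategies along a partition of the turns into infinitely many infinite blocks) applies verbatim on an open set.

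For \emph{part (3)}, the ``only if'' direction is just restriction: a run of the HA game that stays inside an open $\Omega'\subset\Omega$ is also a legal run inside $\Omega$ whose outcome lies in $\Omega'$, so the HAW property passes to open subsets, and transporting through the charts of $M$ restricted to $U_\alpha$ gives that $E\cap U_\alpha$ is HAW on $U_\alpha$. For the ``if'' direction, fix a chart $(V,\psi)$; Player A's strategy on $\psi(V)$ is to play, as long as Player B's current ball $B_k$ is not yet contained in some $\psi(U_\alpha\cap V)$, a ``centring'' hyperplane neighbourhood (a complex hyperplane through the centre of $B_k$, of radius $\beta\rho_k$), which forces $\rho_k\to 0$; since $\bigcap_k B_k$ is then a single point lying in some open $U_{\alpha_0}\cap V$, Player B's ball eventually does land inside $\psi(U_{\alpha_0}\cap V)$, and from that turn on Player A switches to a winning strategy for $\psi(E\cap U_{\alpha_0}\cap V)$, which is HAW on $\psi(U_{\alpha_0}\cap V)$ because the HAW property is a biholomorphic invariant. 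This invariance (the holomorphic analogue of \cite[Prop.~2.3(c)]{BFKRW}, to be proved in \S5) is also what makes ``HAW on $M$'' independent of the atlas chosen in the definition.

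For \emph{part (4)}, after replacing $\prod_j M_j$ by product charts $\prod_j(U_{\alpha_j},\phi_{\alpha_j})$ the statement reduces to: if $E_j$ is HAW on an open $\Omega_j\subseteq\C^{n_j}$ for $1\le j\le m$, then $\prod_j E_j$ is HAW on $\prod_j\Omega_j\subseteq\C^{N}$ with $N=\sum_j n_j$. The key observation is that for the coordinate projection $\pi_j\colon\C^N\to\C^{n_j}$ the preimage of a complex hyperplane is a complex hyperplane, $\pi_j^{-1}\bigl(\mathcal{H}^{(\delta)}\bigr)=\bigl(\pi_j^{-1}\mathcal{H}\bigr)^{(\delta)}$, and $\pi_j$ carries closed balls onto closed balls of the same radius. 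Player A then runs $m+1$ games in parallel: splitting the turns periodically into classes $T_0,\dots,T_m$, on $T_0$ Player A makes centring moves (forcing $\bigcap_k B_k$ to be a single point $\mathbf z$), while on $T_j$ with $1\le j\le m$ Player A plays the $\pi_j$-pullback of the move dictated by a winning strategy for $E_j$ in the $\beta^{m+1}$-HA game on $\Omega_j$, available by Lemma \ref{L2.1}(2), fed the projected balls $\pi_j(B_k)$. One checks these pulled-back moves are legal in the product game and that the projected balls at the $T_j$-turns form a legal run of the $\beta^{m+1}$-game, whence $\pi_j(\mathbf z)\in E_j$ for every $j$, i.e.\ $\mathbf z\in\prod_j E_j$; letting $\beta\to 0$ yields the HAW property.

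I expect \emph{part (4)} to be the main obstacle: one has to track how the radius degrades across the $m+1$ interleaved games (hence the passage to the parameter $\beta^{m+1}$) and, crucially, arrange that $\bigcap_k B_k$ is a single point rather than merely non-empty — which is exactly why the auxiliary centring class $T_0$ is needed, both here and in the ``if'' part of (3). The other conceptual point is the biholomorphic invariance of the HAW property underlying the atlas-independence in (3): it holds because a biholomorphism has $\C$-linear derivative, so it sends complex hyperplanes to sets that are, to first order at each point, complex-affine hyperplanes, and making this precise is the content deferred to \S5.
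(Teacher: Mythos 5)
Your proposal is correct and follows exactly the route the paper intends: the paper states Lemma \ref{L2.2} without proof, simply citing the real-case analogues in \cite[Lemma 2.2]{AGGL}, \cite{KW17} and \cite{BFKRW}, and your sketch is the standard transfer of those arguments, with the right complex-specific observations (coordinate projections preserve complex hyperplanes and ball radii, biholomorphic invariance deferred to \S5, and the centring moves needed to make $\bigcap_k B_k$ a single point in parts (3) and (4)).
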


{\begin{remark}
                  The main difference between the properties of HAW subsets of real Euclidean spaces (resp. manifolds) and their complex analogues comes from the necessity of preserving the underlying real or complex structures.  While the images of real HAW sets under $C^1$ diffeomorphisms are HAW as mentioned in \cite[Lemma 2.1(4), Lemma 2.2(3)]{AGGL}, one requires a biholomorphic correspondence in the complex case.  The analogous results for the complex case are proved in \S5.
                 \end{remark}
}

\subsection{Hyperplane-potential (HP) game in \texorpdfstring{$\C^n$}{Cn}}
The HP game is a two-parameter variant of Schmidt's game.  The real Euclidean space version of it, studied in \cite{FSU}, can be easily extended to complex Euclidean spaces $\C^n$ as follows:
\begin{itemize}
 \item Fix the two parameters $0<\beta<1$ and $\gamma>0$.
 \item Player B begins with a closed ball $B_0\subset\C^n$ of radius $\rho=\rho_0$.
 \item After Player B makes the $j^{\text{th}}$ move $B_j$, Player A chooses a countable collection of hyperplane neighborhoods $\{\mathcal{H}^{(\delta_{j,k})}_{j,k}:k\in\N\}$ that satisfy 
 \[ 
 \sum_{k\in\N} \delta_{j,k}^\gamma\leq(\beta\rho_j)^\gamma. 
 \]
 
 \item For their $(j+1)^{\text{th}}$ move, Player B chooses a closed ball $B_{j+1}\subset B_j\setminus\left(\bigcup_{k\in\N} \mathcal{H}^{(\delta_{j,k})}_{j,k} \right)$ of radius $\rho_{j+1}\geq\beta\rho_j$.
\end{itemize}
The above procedure generates a nested sequence of closed balls $\{B_j\}_{j\in\N}$.

A subset $E\subset\C^n$ will be called $(\beta,\gamma)$-hyperplane-potential-winning (that is, $(\beta,\gamma)$-HPW) if Player A can ensure that \[ \bigcap_{j=0}^\infty B_j\cap\left( E\cup\bigcup_{j=0}^\infty \bigcup_{k\in\N} \mathcal{H}_{j,k}^{(\delta_{j,k})}\right) \neq\emptyset\]
for every possible sequence of moves $\{ B_j \}_{j\in\N}$ by Player B.  Furthermore, $E$ is called hyperplane-potential-winning (HPW) if it is $(\beta,\gamma)$-HPW for any $0<\beta<1$ and $\gamma>0$.

{\begin{remark}
                  In view of \cite[Theorem C.8]{FSU}, the notions of HAW and HPW sets are equivalent.  Thus to prove that a particular set is hyperplane-absolute-winning, one can set up a suitable hyperplane-potential game and show that the set is winning with respect to that game.
                 \end{remark}
}

\section{A special case of Theorem \ref{T1.1_AGGL}}\label{S3}
Let $K$ be a totally imaginary field of degree $n$ over $\Q$.  If $\Sigma$ denotes the set of distinct field embeddings $K\hookrightarrow\C$, then $|\Sigma|=n$.  The images of any $p\in K$ are listed as a vector in $\C^n$ using the map
\begin{equation}\label{Theta} \Theta:K\to\prod_{\sigma\in\Sigma}\C, \quad p\mapsto(\sigma(p))_{\sigma\in\Sigma}.
\end{equation}
From \cite{BHC,Morris}, it is known that the group $\text{Res}_{K/\Q}\mathrm{SL}_2(\Z[i])$ is a lattice in $\text{Res}_{K/\Q}\mathrm{SL}_2(\C)= \prod_{\sigma\in\Sigma} \mathrm{SL}_2(\C)$, where $\text{Res}_{K/\Q}$ denotes the Weil restriction-of-scalars functor.  We set
\[G=\prod_{\sigma\in\Sigma} \mathrm{SL}_2(\C), \quad \Gamma=\text{Res}_{K/\Q}\mathrm{SL}_2(\Z[i]). \]
From the definitons, we see that $\text{Res}_{K/\Q}\mathrm{SL}_2(\Z[i])$ coincides with the subgroup $\Theta(\mathrm{SL}_2(\mathcal{O}_{K}))$, where $\Theta(g)=(\sigma(g))_{\sigma\in\Sigma}$.  We will now state the following special (weighted) case of Theorem  \ref{T1.1_AGGL}:

\begin{proposition}\label{P3.1}
 Let $\mathbf{r}\in\R^n$ be a real weight vector with $r_\sigma\geq 0$ for $\sigma\in\Sigma$ and $\sum_{\sigma\in\Sigma}r_\sigma=1$.  Define the vector diagonal flow
 \begin{equation}\label{eq3.1}
  g_{\mathbf{r}}(t):=\left( \begin{pmatrix}
                         e^{r_\sigma t} & 0 \\ 0 & e^{-r_\sigma t}
                        \end{pmatrix}
 \right)_{\sigma\in\Sigma}
 \end{equation}
and let $F_{\mathbf{r}}=\{g_{\mathbf{r}}(t):t\in\R\}$ be the full orbit of $g_{\mathbf{r}}(0)=\left( (I_2) \right)_{\sigma\in\Sigma}$.  Then the set \[ E(F_{\mathbf{r}}):=\{x\in G/\Gamma:F_{\mathbf{r}}x~\text{is~bounded}\} \] is HAW.
\end{proposition}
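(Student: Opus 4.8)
The plan is to combine a Dani-type correspondence with Proposition~\ref{PInt3.5} (that $\mathbf{Bad}(K,\mathbf{r})$ is HAW in $\C^n$), and to propagate the winning property from a single expanding-horospherical leaf to all of $G/\Gamma$ using the local product structure adapted to $F_{\mathbf{r}}$, Lemma~\ref{L2.2}, and the biholomorphic invariance of the HAW property established in \S5. First I would pass to one-sided orbits. Writing $E^{+}(F_{\mathbf{r}})=\{x\in G/\Gamma:\{g_{\mathbf{r}}(t)x:t\geq 0\}\text{ is bounded}\}$ and $E^{-}(F_{\mathbf{r}})$ analogously, one trivially has $E(F_{\mathbf{r}})=E^{+}(F_{\mathbf{r}})\cap E^{-}(F_{\mathbf{r}})$. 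If $w=\left(\begin{pmatrix}0&1\\-1&0\end{pmatrix}\right)_{\sigma\in\Sigma}\in G$ is the product of Weyl elements, then $w\,g_{\mathbf{r}}(t)\,w^{-1}=g_{\mathbf{r}}(-t)$, so $E^{-}(F_{\mathbf{r}})=w\,E^{+}(F_{\mathbf{r}})$; since left translation by $w$ is a biholomorphism of $G/\Gamma$, the results of \S5 show that $E^{-}(F_{\mathbf{r}})$ is HAW as soon as $E^{+}(F_{\mathbf{r}})$ is, and then $E(F_{\mathbf{r}})$ is HAW by Lemma~\ref{L2.2}(2). Thus it suffices to prove that $E^{+}(F_{\mathbf{r}})$ is HAW.

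Next I would exploit the local structure of $G/\Gamma$. Let $U^{+}$ and $U^{-}$ denote the expanding and contracting horospherical subgroups of $G$ for $F_{\mathbf{r}}$ (the products of the upper-, resp. lower-triangular unipotent subgroups of the factors), let $A$ be the diagonal subgroup, and set $P^{-}=AU^{-}$. A short conjugation computation shows that $\{g_{\mathbf{r}}(t)\,p\,g_{\mathbf{r}}(-t):t\geq 0\}$ is relatively compact for every $p\in P^{-}$, so that $\{g_{\mathbf{r}}(t)py:t\geq 0\}$ is bounded if and only if $\{g_{\mathbf{r}}(t)y:t\geq 0\}$ is; in other words $E^{+}(F_{\mathbf{r}})$ is invariant under left multiplication by $P^{-}$. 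Since the multiplication map $P^{-}\times U^{+}\to G$ is a biholomorphism near the identity, for each $x_{0}\in G/\Gamma$ the map $\psi_{x_{0}}(p,u)=pux_{0}$ ($p$ near the identity in $P^{-}$, $u$ near the identity in $U^{+}$) is a holomorphic chart around $x_{0}$, and these charts cover $G/\Gamma$. The $P^{-}$-invariance then identifies, in these coordinates,
\[\psi_{x_{0}}^{-1}(E^{+}(F_{\mathbf{r}}))=P^{-}_{\varepsilon}\times\{u\in U^{+}_{\varepsilon}:\{g_{\mathbf{r}}(t)ux_{0}:t\geq 0\}\text{ is bounded}\}.\]
Because a full ball is HAW in itself, Lemma~\ref{L2.2}(4) reduces the HAW-ness of this set to that of the fibre $\{u\in U^{+}_{\varepsilon}:\{g_{\mathbf{r}}(t)ux_{0}:t\geq 0\}\text{ bounded}\}\subset U^{+}_{\varepsilon}\cong\C^{n}$; running this over all $x_{0}$ and invoking Lemma~\ref{L2.2}(3) then yields that $E^{+}(F_{\mathbf{r}})$ is HAW.

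Finally I would invoke the Dani correspondence. Identifying $U^{+}$ with $\C^{n}$ via $\left(\begin{pmatrix}1&z_{\sigma}\\0&1\end{pmatrix}\right)_{\sigma}\leftrightarrow\mathbf{z}$, a Mahler-type compactness criterion applied to $g_{\mathbf{r}}(t)$ acting on the $\Gamma$-orbit of $x_{0}$ in $\prod_{\sigma}\C^{2}$ turns boundedness of $\{g_{\mathbf{r}}(t)u(\mathbf{z})x_{0}:t\geq 0\}$ into the statement that no image $\Theta(p,q)$ of a nonzero pair $(p,q)\in\mathcal{O}_{K}^{2}$ becomes too short along the flow. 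For $x_{0}=e\Gamma$ this is (a one-sided relaxation of) the condition defining $\mathbf{Bad}(K,\mathbf{r})$ in Definition~\ref{D3.2}, so the fibre contains $\mathbf{Bad}(K,\mathbf{r})$ (intersected with the ball); for a general $x_{0}$ it is the analogous weighted badly approximable set relative to a twisted $\Gamma$-lattice, which is again HAW by the argument behind Proposition~\ref{PInt3.5} (the alternate formulation of \S4 being insensitive to such a twist). Since a superset of a HAW set is HAW and HAW-ness is inherited by restriction to an open ball, the fibre is HAW, and the chain of reductions above then completes the proof.

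I expect the real difficulty to lie in this last step: pinning down the Dani-type correspondence precisely and uniformly in the base point $x_{0}$, and in particular verifying that the ``bad'' regions it produces are neighbourhoods of \emph{complex affine hyperplanes} in the $U^{+}\cong\C^{n}$ coordinate (the relevant approximation condition being affine in each $z_{\sigma}$), since this is exactly what allows one to play the hyperplane game rather than the ordinary Schmidt game of \cite{AGGL}; establishing this is the role of the alternate formulation obtained in \S4. By contrast, the reduction to one-sided orbits, the passage to a single horospherical fibre, and the transfer along biholomorphic charts are formal consequences of the machinery set up in \S2 and \S5.
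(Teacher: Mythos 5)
Your proposal is correct and follows essentially the same route as the paper, which proves Proposition \ref{P3.1} by deferring to the argument of \cite[Proposition 3.1]{AGGL}: reduce to forward orbits via the Weyl element, use invariance of $E(F_{\mathbf{r}}^{+})$ under the non-expanding directions together with the local product structure to pass to the expanding horospherical leaf, and conclude via the Dani correspondence (Proposition \ref{P3.4}) and the HAW property of $\mathbf{Bad}(K,\mathbf{r})$ (Proposition \ref{P3.5}). The points you flag as delicate (uniformity in the base point and the complex-hyperplane structure of the bad regions) are exactly what \S4 and Proposition \ref{L214.223} are set up to handle.
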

We will prove Proposition \ref{P3.1} by studying the bounded forward orbits \[ E(F_{\mathbf{r}}^+):=\{x\in G/\Gamma:F_{\mathbf{r}}^+ x~\text{is~bounded}\}, \] where $F_{\mathbf{r}}^+=\{g_{\mathbf{r}}(t):t\geq 0 \}$.  We will closely follow the proof of \cite[Proposition 3.1]{AGGL}.

Fix the weight vector $\mathbf{r}$.  Let $\Sigma_+$ and $\Sigma_0$ denote the collection of positive and zero coordinates, respectively, of $\mathbf{r}$.  Let $|\Sigma_+|=n_+$.  Then $|\Sigma_0|=n-n_+$.  Moreover, let $\omega\in\Sigma$ correspond to the largest coordinate of $\mathbf{r}$: $r_\omega=r=\max_{\sigma\in\Sigma}r_\sigma$.  Also, define the $\mathbf{r}$-norm on $\C^n$ corresponding to the fixed weight vector $\mathbf{r}$ by
\[ \|\mathbf{z}\|_{\mathbf{r}}:=\max_{\sigma\in\Sigma_+}|z_\sigma|^{\frac{1}{r_\sigma}}. \]
Each $p\in K$, as embedded in $\prod_{\sigma\in\Sigma}\C$ via the map $\Theta$, would then have the $\mathbf{r}$-norm
\[ \|p\|_{\mathbf{r}}=\|\Theta(p)\|_{\mathbf{r}}=\max_{\sigma\in\Sigma_+}|\sigma(p)|^{\frac{1}{r_\sigma}}. \]

\begin{definition}\label{D3.2}
 We call $\mathbf{z}=(z_\sigma)_{\sigma\in\Sigma}\in\prod_{\sigma\in\Sigma}\C$ a $(K,\mathbf{r})$-badly approximable vector if
 \[ \inf_{\overset{\scriptstyle p\in\mathcal{O}_{K}}{q\in\mathcal{O}_{K}\setminus\{0\}}} \max\left\{  \max_{\sigma\in\Sigma_+}\|q\|_{\mathbf{r}}^{r_\sigma}|\sigma(q)z_\sigma+\sigma(p)|, \max_{\sigma\in\Sigma_0}\max\{|\sigma(q)z_\sigma+\sigma(p)|,|\sigma(q)|\}\right\}>0.\]
 We denote by $\mathbf{Bad}(K,\mathbf{r})$ the set of $(K,\mathbf{r})$-badly approximable vectors.
\end{definition}
\begin{remark}
 The above is the complex analogue of \cite[Definition 3.2]{AGGL}.
\end{remark}
Let $\displaystyle H=\left( \begin{pmatrix}
                             1 & * \\ 0 & 1
                            \end{pmatrix}
 \right)_{\sigma\in\Sigma}$ be the strictly upper triangular subgroup of $G$ that is an embedding of $\prod_{\sigma\in\Sigma}\C$ via the map
 \[ \iota: \prod_{\sigma\in\Sigma}\C\to\prod_{\sigma\in\Sigma}\mathrm{SL}_2(\C), \quad \iota((z_\sigma)_{\sigma\in\Sigma})= \left( \begin{pmatrix}
                             1 & z_\sigma \\ 0 & 1
                            \end{pmatrix}
 \right)_{\sigma\in\Sigma}. \]
 
The bounded $F_{\mathbf{r}}^+$ trajectories and vectors in $\mathbf{Bad}(K,\mathbf{r})$ are related to each other by the {following version of} Dani correspondence:
\begin{proposition}\label{P3.4}
 A vector $\mathbf{z}=(z_\sigma)_{\sigma\in\Sigma}$ is in $\mathbf{Bad}(K,\mathbf{r})$ if and only if the trajectory $F_{\mathbf{r}}^+\iota(\mathbf{z})\Gamma$ is bounded in $G/\Gamma$; that is, if $\pi:G\to G/\Gamma$ is the natural projection, then
 \begin{equation}\label{E3.2}
  \mathbf{Bad}(K,\mathbf{r})=\iota^{-1}(\pi^{-1}(E(F_{\mathbf{r}}^+))\cap H).
 \end{equation}
\end{proposition}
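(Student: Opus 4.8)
The plan is to deduce \eqref{E3.2} from a Dani-type correspondence, running closely parallel to \cite[Proposition 3.4]{AGGL} with complex absolute values in place of real ones. First I would identify $G/\Gamma$ with (a finite cover of) the $G$-orbit of the lattice $\Lambda_0:=\Theta(\mathcal{O}_{K}^2)$ inside the space of lattices of $\prod_{\sigma\in\Sigma}\C^2$; since $\Gamma$ stabilises $\Lambda_0$, the assignment $g\Gamma\mapsto g\Lambda_0$ is well defined, and Mahler's compactness criterion reformulates boundedness of $F_{\mathbf{r}}^+\iota(\mathbf{z})\Gamma$ as the existence of $\varepsilon>0$ with $\bigl\|g_{\mathbf{r}}(t)\iota(\mathbf{z})\Theta(p,q)\bigr\|\ge\varepsilon$ for all $t\ge0$ and all $(p,q)\in\mathcal{O}_{K}^2\setminus\{0\}$. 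A direct matrix computation gives that the $\sigma$-component of $g_{\mathbf{r}}(t)\iota(\mathbf{z})\Theta(p,q)$ equals $\bigl(e^{r_\sigma t}w_\sigma,\;e^{-r_\sigma t}\sigma(q)\bigr)$ with $w_\sigma:=\sigma(q)z_\sigma+\sigma(p)$, so writing $s=e^t\ge1$ the boundedness condition becomes a uniform lower bound on
\[
\max\Bigl\{\ \max_{\sigma\in\Sigma_+}\max\{\,s^{r_\sigma}|w_\sigma|,\ s^{-r_\sigma}|\sigma(q)|\,\},\ \ \max_{\sigma\in\Sigma_0}\max\{\,|w_\sigma|,\ |\sigma(q)|\,\}\ \Bigr\}
\]
over $s\ge1$ and $(p,q)\ne0$. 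Writing $\Psi(p,q)$ for the quantity maximised in Definition \ref{D3.2}, so that $\mathbf{z}\in\mathbf{Bad}(K,\mathbf{r})$ iff $\inf_{(p,q)}\Psi(p,q)>0$, it then remains to show that such a uniform lower bound exists exactly when $\inf_{(p,q)}\Psi(p,q)>0$.

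For the implication ``$\mathbf{z}\in\mathbf{Bad}(K,\mathbf{r})\Rightarrow F_{\mathbf{r}}^+\iota(\mathbf{z})\Gamma$ bounded'' I would use a two-regime estimate. Vectors with $q=0$ are controlled by discreteness of $\Theta(\mathcal{O}_{K})$ in $\prod_\sigma\C$. For $q\ne0$, if the maximum defining $\Psi(p,q)\ge c>0$ is attained on a $\Sigma_0$-coordinate we are done, because the flow acts trivially there; otherwise $\|q\|_{\mathbf{r}}^{r_{\sigma_0}}|w_{\sigma_0}|\ge c$ for some $\sigma_0\in\Sigma_+$, and, picking $\sigma^*\in\Sigma_+$ with $|\sigma^*(q)|=\|q\|_{\mathbf{r}}^{r_{\sigma^*}}$, one has $s^{-r_{\sigma^*}}|\sigma^*(q)|=(\|q\|_{\mathbf{r}}/s)^{r_{\sigma^*}}\ge1$ whenever $s\le\|q\|_{\mathbf{r}}$, and $s^{r_{\sigma_0}}|w_{\sigma_0}|\ge\|q\|_{\mathbf{r}}^{r_{\sigma_0}}|w_{\sigma_0}|\ge c$ whenever $s\ge\|q\|_{\mathbf{r}}$. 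Hence the displayed maximum is $\ge\min\{1,c\}$ uniformly in $s\ge1$, which gives boundedness.

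The delicate direction will be ``$\mathbf{z}\notin\mathbf{Bad}(K,\mathbf{r})\Rightarrow F_{\mathbf{r}}^+\iota(\mathbf{z})\Gamma$ unbounded''. Starting from a sequence $(p_k,q_k)$ with $q_k\ne0$ and $\Psi_k:=\Psi(p_k,q_k)\to0$, I must produce times $t_k\ge0$ at which $\bigl\|g_{\mathbf{r}}(t_k)\iota(\mathbf{z})\Theta(p_k,q_k)\bigr\|\to0$. A preliminary point is that $\|q_k\|_{\mathbf{r}}\to\infty$: when $\Sigma_0\ne\emptyset$ this follows from $|\sigma(q_k)|\to0$ for $\sigma\in\Sigma_0$ together with $\prod_{\sigma\in\Sigma}|\sigma(q_k)|=|N_{K/\Q}(q_k)|\ge1$; when $\Sigma_0=\emptyset$ it follows from discreteness of $\mathcal{O}_{K}^2$ unless $\mathbf{z}\in\Theta(K)$, a case handled separately (if $\mathbf{z}=\Theta(a/b)$ with $a,b\in\mathcal{O}_{K}$, then $g_{\mathbf{r}}(t)\iota(\mathbf{z})\Theta(-a,b)=\bigl(0,\,e^{-r_\sigma t}\sigma(b)\bigr)_\sigma\to0$ as $t\to\infty$). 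The main obstacle is that the naive choice $s_k=\|q_k\|_{\mathbf{r}}$ only forces $s_k^{-r_\sigma}|\sigma(q_k)|\le1$, not $\to0$; I would overshoot and take $s_k=\|q_k\|_{\mathbf{r}}\,\Psi_k^{-1/2}$, so that $t_k=\log s_k\ge0$ for large $k$. Because $r_\sigma\le1$ for all $\sigma$ and $r_\sigma>0$ on $\Sigma_+$, this yields $s_k^{r_\sigma}|w_{\sigma,k}|\le\Psi_k^{\,1-r_\sigma/2}\to0$ and $s_k^{-r_\sigma}|\sigma(q_k)|\le\Psi_k^{\,r_\sigma/2}\to0$ for $\sigma\in\Sigma_+$, while the $\Sigma_0$-coordinates are $\le\Psi_k\to0$; hence the whole vector shrinks to $0$. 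I expect this simultaneous control of the expanding and contracting coordinates by a single time parameter to be the only genuine difficulty; combining it with Mahler's criterion yields \eqref{E3.2}.
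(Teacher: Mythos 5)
Your proposal is correct and follows essentially the same route as the paper: both pass to the lattice orbit $g\Gamma\mapsto\psi(g)L_{K}$ in $\mathrm{SL}_{2n}(\C)/\mathrm{SL}_{2n}(\Z[i])$, apply Mahler's compactness criterion, and run the standard two-sided Dani-type estimate, which the paper outsources to \cite[Proposition 3.4]{AGGL} and you carry out explicitly and correctly (including the choice $s_k=\|q_k\|_{\mathbf{r}}\Psi_k^{-1/2}$ and the rational-point degeneracy). The only step you wave at --- ``(a finite cover of)'' --- is exactly the part the paper proves in detail, namely that the stabilizer of $L_{K}$ is precisely $\Gamma$, so that the orbit map is proper and boundedness of the lattice orbit genuinely pulls back to boundedness in $G/\Gamma$; this should be supplied (e.g.\ via \eqref{E3.4} and \cite[Theorem 1.13]{MSR}) but is not a conceptual obstruction.
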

\begin{proof}
 Let $\Sigma=\{\sigma_1,\dots,\sigma_n\}$ and $r_{\sigma_j}=r_j$.  Without loss of generality, assume that $r_j>0$ for $1\leq j\leq n_+$ so that $r_j=0$ for $n_+<j\leq n$.  If $D_{K}$ denotes the discriminant of $K$, then $D_{K}^{-\frac{1}{2n}}\Theta(\mathcal{O}_{K})$ is a unimodular lattice of $\C^n$.  Let $L_{K}:=D_{K}^{-\frac{1}{2n}}\Theta(\mathcal{O}_{K})\times D_{K}^{-\frac{1}{2n}}\Theta(\mathcal{O}_{K}) \subset \C^{2n}$, and define a homomorphism $\psi:G\to\mathrm{SL}_{2n}(\C)$ by
 \begin{equation}\label{E3.3}
  \psi(g)=\begin{pmatrix}
           \text{diag}(a_{1,1},\dots,a_{1,n}) & \text{diag}(a_{2,1},\dots,a_{2,n}) \\ \text{diag}(a_{3,1},\dots,a_{3,n}) & \text{diag}(a_{4,1},\dots,a_{4,n})
          \end{pmatrix} \; \text{for}~g=\left( \begin{pmatrix}
          a_{1,j} & a_{2,j} \\ a_{3,j} & a_{4,j}
          \end{pmatrix}\right)_{1\leq j \leq n}.
 \end{equation}
Observe that $\psi(g)$ fixes $L_{K}$ for any $g\in\Gamma$.  Now suppose that $\psi(g) L_{K}=L_{K}$ for some $g$ as in \eqref{E3.3}.  It follows that $a_{2l+1,j}\sigma_j(k)+a_{2l,j}\sigma_j(k')\in\sigma_j(\mathcal{O}_{K})$ for all $1\leq j \leq n$, $l=0,1$, $k,k'\in\mathcal{O}_{K}$.  Choosing $k$ or $k'$ to be 0, it can be shown that $a_{l,j}\sigma_j(\mathcal{O}_{K})=\sigma_j(\mathcal{O}_{K})$ for all $1\leq j\leq n$, $1\leq l \leq 4$.  As a result,
\[ (g)_j\in M_{2\times 2}\left(\sigma_j(\mathcal{O}_{K})\right)\cap\mathrm{SL}_2(\C)=\sigma_j(\mathcal{O}_{K})~\text{for}~1\leq j \leq n. \]
Since an element of $\mathrm{SL}_2(\C)$ is uniquely determined by its action on $\widehat{\C}=\partial \aech^3$ via M\"{o}bius transformations, it follows that, if $\psi(g) L_{K}=L_{K}$, then $(g)_{j'}=\sigma_{j'}(\sigma_j^{-1}((g)_j))$, showing that $\psi(g)\in\Theta(\mathrm{SL}_2(\mathcal{O}_{K}))=\Gamma$.  We have thus proved
\begin{equation}\label{E3.4}
\{ g\in G:\psi(g) L_{K}=L_{K} \}=\Gamma. 
\end{equation}
As $\Gamma$ is a lattice in $G$, from \cite[Theorem 1.13]{MSR} and \eqref{E3.4} and implicitly using the fact that $\mathrm{SL}_{2n}(\C)/\mathrm{SL}_{2n}(\Z[i])$ is the space of unimodular lattices in $\C^{2n}$, we deduce that the embedding
\[ \phi:G/\Gamma\to\mathrm{SL}_{2n}(\C)/\mathrm{SL}_{2n}(\Z[i]),\quad\phi(g\Gamma)=\psi(g)L_{K} \] is a proper map.  Hence it follows that
\begin{equation}\label{E3.5}
 F_{\mathbf{r}}^+\iota(\mathbf{z})\Gamma~\text{is~bounded~in}~G/\Gamma\iff\psi(F_{\mathbf{r}}^+\iota(\mathbf{z}))L_{K}~\text{is~bounded~in}~\mathrm{SL}_{2n}(\C)/\mathrm{SL}_{2n}(\Z[i]).
\end{equation}
We have
\[\psi(g_{\mathbf{r}}(t))=\text{diag}(e^{r_1t},\dots,e^{r_nt},e^{-r_1t},\dots,e^{-r_nt}), \]
and \[ \psi(\iota(\mathbf{z}))=\begin{pmatrix}
                            I_n & \text{diag}(z_1,\dots,z_n) \\ 0 & I_n
                           \end{pmatrix}.
 \]
 Using Mahler's compactness criterion and \eqref{E3.5}, the rest of the proof follows as in \cite[Proposition 3.4]{AGGL}.
\end{proof}

\section{An alternate formulation of \texorpdfstring{$\mathbf{Bad}(K,\mathbf{r})$}{Bad(K,r)}}
In this section, we give a description of $\mathbf{Bad}(K,\mathbf{r})$, defined in \S3, analogous to \cite[\S4]{AGGL} and prove a variant of \cite[Proposition 3.5]{AGGL}.  Our treatment directly follows \cite[\S3,\S4]{AGGL}.  We include the details for convenience of the reader.

For $\varepsilon>0$, we set
\[\mathcal{O}_{K}(\mathbf{r},\varepsilon):=\{ q\in\mathcal{O}_{K}\setminus\{0\}:\max_{\sigma\in\Sigma_0}|\sigma(q)|\leq\varepsilon \}. \]
For $(p,q)\in\mathcal{O}_{K}\times\mathcal{O}_{K}(\mathbf{r},\varepsilon)$, define
\begin{equation}\label{delta}
 \Delta_\varepsilon(p,q):=\prod_{\sigma\in\Sigma_+}\overline{B}\left( \frac{\sigma(p)}{\sigma(q)},\frac{\varepsilon}{|\sigma(q)|\|q\|_{\mathbf{r}}^{r_\sigma}} \right) \times \prod_{\sigma\in\Sigma_0}\overline{B}\left( \frac{\sigma(p)}{\sigma(q)},\frac{\varepsilon}{|\sigma(q)|} \right) \subset\prod_{\sigma\in\Sigma}\C,
 \end{equation}
where $\overline{B}(z_0,\rho)=\{z\in\C:|z-z_0|\leq\rho\}$.  {In other words, the closed boxes $\Delta_\varepsilon(p,q)$ contain all points $\mathbf{z}=(\sigma(z)))_{\sigma\in\Sigma}$ whose coordinates are sufficiently close to those of $\Theta\left( \dfrac{p}{q}\right)$, where $\Theta$ is given by \eqref{Theta}.  Note that these sets as viewed in $\C^n$ have ``disc faces'', whereas such sets in $\R^{2n}$ would have rectangular faces.} Setting 
\begin{equation}\label{E4.1}
\mathbf{Bad}_\varepsilon(K,\mathbf{r}):=\prod_{\sigma\in\Sigma}\C\setminus\bigcup_{\overset{\scriptstyle p\in\mathcal{O}_{K}}{q\in\mathcal{O}_{K}\setminus\{0\}}}\Delta_\varepsilon(p,q),
\end{equation}
we have:
\begin{lemma}\label{L4.1}
 $\displaystyle \mathbf{Bad}(K,\mathbf{r}) = \bigcup_{\varepsilon>0} \mathbf{Bad}_\varepsilon(K,\mathbf{r}) $.
\end{lemma}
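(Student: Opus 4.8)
\emph{Proposal.} The plan is to check that, for each $\varepsilon>0$, membership in $\mathbf{Bad}_\varepsilon(K,\mathbf{r})$ is exactly the assertion that the quantity infimized in Definition \ref{D3.2} stays above $\varepsilon$; the lemma then follows by letting $\varepsilon$ range. Write, for $\mathbf{z}=(z_\sigma)_{\sigma\in\Sigma}$, $p\in\mathcal{O}_{K}$, $q\in\mathcal{O}_{K}\setminus\{0\}$,
\[ \Phi(\mathbf{z},p,q):=\max\left\{ \max_{\sigma\in\Sigma_+}\|q\|_{\mathbf{r}}^{r_\sigma}|\sigma(q)z_\sigma+\sigma(p)|,\ \max_{\sigma\in\Sigma_0}\max\{|\sigma(q)z_\sigma+\sigma(p)|,|\sigma(q)|\}\right\}, \]
so that $\mathbf{z}\in\mathbf{Bad}(K,\mathbf{r})$ if and only if $\inf_{p,q}\Phi(\mathbf{z},p,q)>0$. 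The one elementary observation driving the whole proof is the following. Since $q\neq 0$ forces $\sigma(q)\neq 0$ for every $\sigma\in\Sigma$ (and hence $\|q\|_{\mathbf{r}}>0$), multiplying the inequalities defining $\Delta_\varepsilon(p,q)$ in \eqref{delta} through by $|\sigma(q)|$, respectively by $|\sigma(q)|\,\|q\|_{\mathbf{r}}^{r_\sigma}$, shows that for a fixed $q\in\mathcal{O}_{K}(\mathbf{r},\varepsilon)$ one has $\mathbf{z}\notin\bigcup_{p\in\mathcal{O}_{K}}\Delta_\varepsilon(p,q)$ if and only if
\[ \max\left\{ \max_{\sigma\in\Sigma_+}\|q\|_{\mathbf{r}}^{r_\sigma}|\sigma(q)z_\sigma+\sigma(p)|,\ \max_{\sigma\in\Sigma_0}|\sigma(q)z_\sigma+\sigma(p)|\right\}>\varepsilon\quad\text{for every }p\in\mathcal{O}_{K} \]
(using $\mathcal{O}_{K}=-\mathcal{O}_{K}$ to exchange the center $\sigma(p)/\sigma(q)$ of a ball for $-\sigma(p)/\sigma(q)$). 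But $q\in\mathcal{O}_{K}(\mathbf{r},\varepsilon)$ means precisely $\max_{\sigma\in\Sigma_0}|\sigma(q)|\le\varepsilon$, so the displayed left-hand side exceeds $\varepsilon$ exactly when $\Phi(\mathbf{z},p,q)>\varepsilon$. Thus, for every $q\in\mathcal{O}_{K}(\mathbf{r},\varepsilon)$: $\ \mathbf{z}\notin\bigcup_{p}\Delta_\varepsilon(p,q)\iff\Phi(\mathbf{z},p,q)>\varepsilon$ for all $p\in\mathcal{O}_{K}$.

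Given this, both inclusions are immediate. If $\mathbf{z}\in\mathbf{Bad}_\varepsilon(K,\mathbf{r})$, the equivalence yields $\Phi(\mathbf{z},p,q)>\varepsilon$ for all $p$ and all $q\in\mathcal{O}_{K}(\mathbf{r},\varepsilon)$, whereas for $q\notin\mathcal{O}_{K}(\mathbf{r},\varepsilon)$ one has $\Phi(\mathbf{z},p,q)\ge\max_{\sigma\in\Sigma_0}|\sigma(q)|>\varepsilon$ trivially; hence $\inf_{p,q}\Phi(\mathbf{z},p,q)\ge\varepsilon>0$, i.e.\ $\mathbf{z}\in\mathbf{Bad}(K,\mathbf{r})$. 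Conversely, if $\mathbf{z}\in\mathbf{Bad}(K,\mathbf{r})$, set $c:=\inf_{p,q}\Phi(\mathbf{z},p,q)>0$ and pick any $\varepsilon$ with $0<\varepsilon<c$; then for every $q\in\mathcal{O}_{K}(\mathbf{r},\varepsilon)$ and every $p$ we have $\Phi(\mathbf{z},p,q)\ge c>\varepsilon$, and since $\max_{\sigma\in\Sigma_0}|\sigma(q)|\le\varepsilon$ this value is already attained after dropping that term, so $\mathbf{z}\notin\bigcup_p\Delta_\varepsilon(p,q)$. As this holds for all admissible $p,q$ we get $\mathbf{z}\in\mathbf{Bad}_\varepsilon(K,\mathbf{r})\subset\bigcup_{\varepsilon'>0}\mathbf{Bad}_{\varepsilon'}(K,\mathbf{r})$.

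There is no serious obstacle here — it is a bookkeeping argument — but the one point that needs care is the role of the truncation to $\mathcal{O}_{K}(\mathbf{r},\varepsilon)$: it is exactly what makes the ``$|\sigma(q)|$'' term for $\sigma\in\Sigma_0$ inert, in that denominators $q$ with some $|\sigma(q)|$ large contribute a large value of $\Phi$ automatically and produce no box, while for admissible $q$ the box $\Delta_\varepsilon(p,q)$ records precisely the $\Sigma_+$- and $\Sigma_0$-coordinates that remain. One should also be mildly attentive to the passage between the closed balls of \eqref{delta} and the strict inequality $\inf>0$; this is harmless, as any such slack is absorbed by shrinking $\varepsilon$ (for instance replacing $\varepsilon$ by $\varepsilon/2$), which does not affect the union over $\varepsilon>0$.
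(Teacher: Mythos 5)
Your proof is correct and follows essentially the same route as the paper's: both unwind the definition of $\Delta_\varepsilon(p,q)$ to identify $\mathbf{Bad}_\varepsilon(K,\mathbf{r})$ with the set where the quantity in Definition \ref{D3.2} exceeds $\varepsilon$, using that the truncation to $\mathcal{O}_K(\mathbf{r},\varepsilon)$ renders the $|\sigma(q)|$ terms for $\sigma\in\Sigma_0$ harmless. Your explicit handling of the sign ($\mathcal{O}_K=-\mathcal{O}_K$) and of the strict-versus-nonstrict inequality is slightly more careful than the paper's, but it is the same argument.
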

\begin{proof}
 It is enough to show that $\mathbf{Bad}_\varepsilon(K,\mathbf{r})$ is in fact the set of vectors $\mathbf{z}=(z_\sigma)_{\sigma\in\Sigma}\in\prod_{\sigma\in\Sigma}\C$ that satisfy
 \begin{equation}\label{E4.2}
  \inf_{\overset{\scriptstyle p\in\mathcal{O}_{K}}{q\in\mathcal{O}_{K}\setminus\{0\}}} \max\begin{Bmatrix}
\underset{1\leq j\leq n_+}{\max}\|q\|_{\mathbf{r}}^{r_j}|\sigma_j(q)z_j+\sigma_j(p)| , \\ 
\underset{n_+< j\leq n}{\max}\max\{|\sigma_j(q)z_j+\sigma_j(p)|,|\sigma_j(q)| \}                                                              
\end{Bmatrix} >\varepsilon
 \end{equation}
using the notation from the proof of Proposition \ref{P3.4}.  By the definition of $\mathcal{O}_{K}(\mathbf{r},\varepsilon)$, equation \eqref{E4.2} is equivalent to
\begin{equation}\label{E4.3}
 \inf_{\overset{\scriptstyle p\in\mathcal{O}_{K}}{q\in\mathcal{O}_{K}\setminus\{0\}}} \max\begin{Bmatrix}
\underset{1\leq j\leq n_+}{\max}\|q\|_{\mathbf{r}}^{r_j}|\sigma_j(q)z_j+\sigma_j(p)| , \\ 
\underset{n_+< j\leq n}{\max}\{|\sigma_j(q)z_j+\sigma_j(p)| \}                                                              
\end{Bmatrix} >\varepsilon.
\end{equation}
From the definition of $\Delta_\varepsilon(p,q)$, it is clear that if $\mathbf{z}\in\Delta_\varepsilon(p,q)$ for some $(p,q)\in\mathcal{O}_{K}\times\mathcal{O}_{K}(\mathbf{r},\varepsilon)$, then each coordinate $z_\sigma$ of $\mathbf{z}$ satisfies
\[ \text{either}~\|q\|_{\mathbf{r}}^{r_\sigma}|\sigma(q)z_\sigma+\sigma(p)|\leq\epsilon\quad\text{or}~ |\sigma(q)z_\sigma+\sigma(p)| \leq\varepsilon \]
depending on whether $\sigma\in\Sigma_+$ or $\sigma\in\Sigma_0$, respectively, implying that $\mathbf{z}\notin\mathbf{Bad}(K,\mathbf{r})$.  On the other hand, if $\mathbf{z}\notin\Delta_\varepsilon(p,q)$ for every $(p,q)\in\mathcal{O}_{K}\times\mathcal{O}_{K}(\mathbf{r},\varepsilon)$, then $\mathbf{z}$ automatically satisfies the equation in Definition \ref{D3.2}, and so $\mathbf{z}\in\mathbf{Bad}(K,\mathbf{r})$.  This proves the claimed equivalence.
\end{proof}

We now play a two-player $(\beta,\gamma)$-HP game in $\C^n$, where $0<\beta<\frac{1}{3}$ and $\gamma>0$.  We relabel the move index in such a way that the radius of the initial ball for Player B satisfies $\rho_0=\rho(B_0)<1$.  Fix an $R>0$ satisfying
\begin{equation}\label{E4.4}
 \frac{n}{R^\gamma-1}\leq\left(\frac{\beta^2}{2} \right)^\gamma,
\end{equation}
and set
\begin{equation}\label{E4.5}
 \varepsilon=\frac{\rho_0}{4R^{4n}}\quad\text{and}\quad H_l=\frac{\varepsilon R^l}{\rho_0} \quad(l\geq1).
\end{equation}
For each index $l\geq 0$, define
\begin{equation}\label{balls}\mathscr{B}_l:=\left\{ B\subset B_0:\frac{\beta\rho_0}{R^l}<\rho(B)\leq\frac{\rho_0}{R^l} \right\}. \end{equation}
Moreover, define the height function
\[ H:\mathcal{O}_{K}(\mathbf{r},\varepsilon)\to\R,\quad H(q)=\max_{\sigma\in\Sigma_+}|\sigma(q)|\|q\|_{\mathbf{r}}^{r_\sigma}. \]
Also, let
\begin{equation}\label{subd}
\mathscr{P}_m=\{q\in\mathcal{O}_{K}(\mathbf{r},\varepsilon):H_m\leq H(q)<H_{m+1}\},
\end{equation}
and
\begin{equation}\label{rsub} \mathscr{P}_{m,l}=\{q\in\mathscr{P}_m:H_mR^{4n(l-1)}\leq\|q\|_{\mathbf{r}}^{2r}<H_mR^{4nl}\}. \end{equation}
We have the following properties of $\mathcal{O}_K(\mathbf{r},\varepsilon)$:
\begin{lemma}
With the above notation, we have
\begin{enumerate}
 \item \label{L4.2} $1\leq\|q\|_{\mathbf{r}}^{\frac{1}{n}}\leq H(q)\leq\|q\|_{\mathbf{r}}^{2r}$ for all $q\in\mathcal{O}_{K}(\mathbf{r},\varepsilon)$.
\item \label{L4.3}
 $\displaystyle \mathcal{O}_{K}(\mathbf{r},\varepsilon) = \bigcup_{m\geq 0}\bigcup_{l\geq 1} \mathscr{P}_{m+l,l}$.
\end{enumerate}
\end{lemma}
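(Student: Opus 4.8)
The whole lemma reduces to elementary estimates on the absolute values $|\sigma(q)|$, $\sigma\in\Sigma$. I would use three inputs: since $q\in\mathcal{O}_{K}\setminus\{0\}$, the product formula gives $\prod_{\sigma\in\Sigma}|\sigma(q)|=|N_{K/\Q}(q)|\geq 1$; the weight vector satisfies $\sum_{\sigma\in\Sigma_+}r_\sigma=1$; and $|\sigma(q)|\leq\|q\|_{\mathbf{r}}^{r_\sigma}$ for every $\sigma\in\Sigma_+$ directly from the definition of the $\mathbf{r}$-norm. Note also that the parameters in \eqref{E4.4}--\eqref{E4.5} satisfy $0<\varepsilon\leq 1$ and $R>1$, and that $1/n\leq r=\max_{\sigma}r_\sigma\leq 1$ (the upper bound because $\sum_\sigma r_\sigma=1$, the lower because $r\geq 1/n_+\geq 1/n$).

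For part (1): since $|\sigma(q)|\leq\varepsilon\leq 1$ on $\Sigma_0$, the product formula gives $1\leq\big(\prod_{\sigma\in\Sigma_+}\|q\|_{\mathbf{r}}^{r_\sigma}\big)\varepsilon^{n-n_+}=\|q\|_{\mathbf{r}}\,\varepsilon^{n-n_+}$, hence $\|q\|_{\mathbf{r}}\geq 1$ and $\|q\|_{\mathbf{r}}^{1/n}\geq 1$. The product formula also gives $\prod_{\sigma\in\Sigma_+}|\sigma(q)|=|N_{K/\Q}(q)|\big/\prod_{\sigma\in\Sigma_0}|\sigma(q)|\geq 1$, so multiplying the $n_+$ terms that define $H(q)$ yields $H(q)^{n_+}\geq\big(\prod_{\sigma\in\Sigma_+}|\sigma(q)|\big)\|q\|_{\mathbf{r}}\geq\|q\|_{\mathbf{r}}$, i.e.\ $H(q)\geq\|q\|_{\mathbf{r}}^{1/n_+}\geq\|q\|_{\mathbf{r}}^{1/n}$; and since each term satisfies $|\sigma(q)|\,\|q\|_{\mathbf{r}}^{r_\sigma}\leq\|q\|_{\mathbf{r}}^{2r_\sigma}\leq\|q\|_{\mathbf{r}}^{2r}$, also $H(q)\leq\|q\|_{\mathbf{r}}^{2r}$.

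For part (2), the inclusion $\supseteq$ is immediate since $\mathscr{P}_{m+l,l}\subseteq\mathscr{P}_{m+l}\subseteq\mathcal{O}_{K}(\mathbf{r},\varepsilon)$. For $\subseteq$ I would bin a given $q\in\mathcal{O}_{K}(\mathbf{r},\varepsilon)$ twice. Since $H(q)\geq\|q\|_{\mathbf{r}}^{1/n}\geq 1>H_1$ (note $H_1=R^{1-4n}/4<1$) while $H_l\to\infty$, there is a unique $m'\geq 1$ with $H_{m'}\leq H(q)<H_{m'+1}$, i.e.\ $q\in\mathscr{P}_{m'}$. Then, since $\|q\|_{\mathbf{r}}^{2r}\geq H(q)\geq H_{m'}$ by part (1) and $H_{m'}R^{4nl}\to\infty$, there is a unique $l\geq 1$ with $H_{m'}R^{4n(l-1)}\leq\|q\|_{\mathbf{r}}^{2r}<H_{m'}R^{4nl}$, i.e.\ $q\in\mathscr{P}_{m',l}$. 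It remains only to show $l\leq m'$: granting this, $m:=m'-l\geq 0$ and $q\in\mathscr{P}_{m+l,l}$, which together with $\supseteq$ proves the identity.

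The inequality $l\leq m'$ is the heart of the argument, and is exactly where the explicit constants in \eqref{E4.5} enter. Feeding the upper estimate $\|q\|_{\mathbf{r}}^{2r}\leq H(q)^{2rn}<H_{m'+1}^{2rn}=(RH_{m'})^{2rn}$ from part (1) into the lower bound $\|q\|_{\mathbf{r}}^{2r}\geq H_{m'}R^{4n(l-1)}$, substituting $H_m=R^{m-4n}/4$, discarding the favorable factor $4^{1-2rn}\leq 1$ (legitimate since $2rn\geq 2$), and taking logarithms base $R$ (as $R>1$), one is reduced to the exponent inequality $m'-8n+4nl<2rn(m'-4n+1)$. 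It then suffices to check $2rn(m'-4n+1)-m'+8n\leq 4n(m'+1)$, which rearranges to $m'(4n+1-2rn)\geq 4n+2rn-8rn^2$; since $4n+1-2rn>0$ (as $r\leq 1$) and $m'\geq 1$, the left side is at least $4n+1-2rn$, which exceeds the right side by $1+4rn(2n-1)>0$. Hence $l<m'+1$, i.e.\ $l\leq m'$. I expect no conceptual difficulty here beyond careful tracking of the $R$-exponents and confirming that the constant $4R^{4n}$ in \eqref{E4.5} supplies the required slack; everything else is routine bookkeeping with the product formula and the definition of $\|\cdot\|_{\mathbf{r}}$.
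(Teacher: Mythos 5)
Your proof is correct, and it follows the same route the paper intends: the paper simply defers both parts to \cite[Lemmas 4.2, 4.3]{AGGL}, whose arguments (product formula for the norm of a nonzero algebraic integer, the definition of $\|\cdot\|_{\mathbf{r}}$, and the double binning with the exponent bookkeeping forced by the constants in \eqref{E4.4}--\eqref{E4.5}) are exactly what you carry out. Your write-up supplies the details the paper omits, and the key verification $l\leq m'$ checks out.
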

\begin{proof}
Lemma 4.2\eqref{L4.2} follows analogously to \cite[Lemma 4.2]{AGGL}, and Lemma 4.2\eqref{L4.3} follows in a similar way to \cite[Lemma 4.3]{AGGL}. 
\end{proof}

The refined subdivisions \eqref{rsub} and the classes $\mathscr{B}_m$ as in \eqref{balls} have the following relation: 
\begin{lemma}\label{L4.4}
 Let $B\in\mathscr{B}_m$ and $l\in\N$.  Then the elements of the set \[ \mathscr{P}_{m+l,l}(B):=\{(p,q):q\in\mathscr{P}_{m+l,l} ~\text{and}~\Delta_\varepsilon(p,q)\cap B \neq\emptyset\} \] have a constant coordinate ratio.
\end{lemma}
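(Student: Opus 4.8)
The plan is to prove this by contradiction, exploiting that a nonzero element of $\mathcal{O}_{K}$ has $|N_{K/\Q}(\cdot)|\geq 1$; this follows the line of \cite[Lemma 4.4]{AGGL}. Suppose $(p,q)$ and $(p',q')$ both lie in $\mathscr{P}_{m+l,l}(B)$ but $\sigma(p)/\sigma(q)\neq\sigma(p')/\sigma(q')$ for one, hence (since the $\sigma$ are injective) for every, $\sigma\in\Sigma$. Put $\eta:=pq'-p'q\in\mathcal{O}_{K}\setminus\{0\}$, so that $\prod_{\sigma\in\Sigma}|\sigma(\eta)|=|N_{K/\Q}(\eta)|\geq 1$; the goal is to reach a contradiction by showing $\prod_{\sigma\in\Sigma}|\sigma(\eta)|<1$, which forces $\eta=0$, i.e. constant coordinate ratio.

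First I would bound each factor $|\sigma(\eta)|$. Choosing $\mathbf{w}\in\Delta_\varepsilon(p,q)\cap B$ and $\mathbf{w}'\in\Delta_\varepsilon(p',q')\cap B$ one has $|w_\sigma-w_\sigma'|\leq\operatorname{diam}(B)=2\rho(B)$, so the triangle inequality and the defining radii in \eqref{delta}, after multiplying by $|\sigma(q)\sigma(q')|$, give
\[ |\sigma(\eta)|\leq 2\rho(B)\,|\sigma(q)\sigma(q')|+\frac{\varepsilon\,|\sigma(q')|}{\|q\|_{\mathbf{r}}^{r_\sigma}}+\frac{\varepsilon\,|\sigma(q)|}{\|q'\|_{\mathbf{r}}^{r_\sigma}}\qquad(\sigma\in\Sigma_+), \]
and $|\sigma(\eta)|\leq 2\rho(B)\,|\sigma(q)\sigma(q')|+\varepsilon\big(|\sigma(q)|+|\sigma(q')|\big)$ for $\sigma\in\Sigma_0$. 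For $\sigma\in\Sigma_0$ the crude bounds $|\sigma(q)|,|\sigma(q')|\leq\varepsilon$ (which hold since $q,q'\in\mathcal{O}_{K}(\mathbf{r},\varepsilon)$) and $\rho(B)<1$ already yield $|\sigma(\eta)|<4\varepsilon^2$.

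Next I would take the product over all $\sigma\in\Sigma$ and simplify, using: (i) $|\sigma(q)|\leq\|q\|_{\mathbf{r}}^{r_\sigma}$ and $|\sigma(q)|\,\|q\|_{\mathbf{r}}^{r_\sigma}\leq H(q)<H_{m+l+1}$ for $\sigma\in\Sigma_+$, since $q\in\mathscr{P}_{m+l}$; (ii) $\beta\rho_0/R^m<\rho(B)\leq\rho_0/R^m$, since $B\in\mathscr{B}_m$; (iii) $\|q\|_{\mathbf{r}}^{2r},\|q'\|_{\mathbf{r}}^{2r}\in[H_{m+l}R^{4n(l-1)},H_{m+l}R^{4nl})$, since $q,q'\in\mathscr{P}_{m+l,l}$; and, crucially, (iv) $\sum_{\sigma\in\Sigma_+}r_\sigma=1$, so that the weighted products $\prod_{\sigma\in\Sigma_+}(\|q\|_{\mathbf{r}}\|q'\|_{\mathbf{r}})^{\pm r_\sigma}$ and $\prod_{\sigma\in\Sigma_+}|\sigma(q)|$ collapse to plain powers of $\|q\|_{\mathbf{r}}\|q'\|_{\mathbf{r}}$. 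Feeding in $\varepsilon=\rho_0/4R^{4n}$, $H_l=\varepsilon R^l/\rho_0$ from \eqref{E4.5}, and the choice of $R$ in \eqref{E4.4} then makes the resulting powers of $R$, $\rho_0$ and $\varepsilon$ cancel in the right direction, giving $\prod_{\sigma\in\Sigma}|\sigma(\eta)|<1$ and hence the contradiction.

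The delicate point, which I expect to be the main obstacle, is the last step: one must arrange the estimate so that the final exponent of $R$ is nonpositive uniformly in $m\geq 0$, $l\geq 1$, and in $\mathbf{r}$. The difficulty is that for $\sigma\in\Sigma_+$ there are two competing upper bounds on $|\sigma(q)|$ — the bound $|\sigma(q)|\leq\|q\|_{\mathbf{r}}^{r_\sigma}$, efficient when $\|q\|_{\mathbf{r}}^{r_\sigma}$ is small, and $|\sigma(q)|\leq H(q)/\|q\|_{\mathbf{r}}^{r_\sigma}$, efficient when it is large (their geometric mean being $|\sigma(q)|\leq H(q)^{1/2}$) — and one has to balance them so that the ``diameter'' term $2\rho(B)|\sigma(q)\sigma(q')|$, which carries the $l$-dependence through $\|q\|_{\mathbf{r}}$, is absorbed by the lower bound $\|q\|_{\mathbf{r}}^{2r}\geq H_{m+l}R^{4n(l-1)}$ coming from $\mathscr{P}_{m+l,l}$, while the $m$-dependence is killed by the factor $\rho(B)^{\bullet}\leq(\rho_0/R^m)^{\bullet}$. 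The exponent $4n$ in the definition of $\mathscr{P}_{m,l}$ and the $4n$ in $\varepsilon$ are exactly what make this balance go through, and once it is carried out the argument concludes as in \cite[Lemma 4.4]{AGGL}.
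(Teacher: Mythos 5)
Your proposal is correct and follows essentially the same route as the paper: the paper likewise argues by contradiction, pitting the lower bound $|N_{K/\Q}(p_1q_2-p_2q_1)|\geq c_K$ for pairs with distinct ratios (your $c_K=1$ is fine, since the norm of a nonzero element of $\mathcal{O}_K$ is a nonzero rational integer) against the triangle-inequality upper bound coming from both polydiscs meeting $B$, and, like you, defers the final bookkeeping with $R$, $\varepsilon$, $H_l$ and the subdivision $\mathscr{P}_{m+l,l}$ to the computation in \cite[Lemma 4.4]{AGGL}. The only cosmetic difference is that the paper records the resulting constraint on the parameters as the explicit condition $4^n\varepsilon^{2n}<c_K$ on the choice of $R$.
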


Most of the proof of Lemma \eqref{L4.4} follows analogously to \cite[Lemma 4.4]{AGGL}.  We explain the key difference below:

\begin{proof}
 For any $B\in\mathscr{B}_m$ and $q\in\mathscr{P}_{m+l,l}$, \eqref{E4.5} implies
 \begin{equation}\label{E4.9}
  \rho(B)\leq\frac{\varepsilon R^{l+1}}{H(q)}.
 \end{equation}
Suppose that there are two pairs $(p_1,q_1)$ and $(p_2,q_2)$  with
\begin{equation}\label{E4.11}
 \Delta_\varepsilon(p_j,q_j)\cap B\neq\emptyset\quad(j=1,2)
\end{equation}
and
\begin{equation}\label{E4.10}
 p_1q_2\neq p_2q_1.
\end{equation}
Discreteness of the ring $\mathcal{O}_K$ provides a constant $c_K>0$ such that $N(p_1q_2-p_2q_1)\geq c_K$, so that
\begin{equation}\label{E4.12}
 \left| \prod_{\sigma\in\Sigma} \left( \frac{\sigma(p_1)}{\sigma(q_1)}-\frac{\sigma(p_2)}{\sigma(q_2)} \right) \right| \geq \left| \frac{N(p_1q_2-p_2q_1)}{N(q_1q_2)} \right| \geq \frac{c_K}{|N(q_1q_2)|}.
\end{equation}
Choosing $R$ large enough so that it satisfies both \eqref{E4.4} and $4^n\varepsilon^{2n}<c_K$, where $\varepsilon$ is as in \eqref{E4.5},  the contradictory inequality
\begin{equation}\label{E4.13}
 \left| \prod_{\sigma\in\Sigma} \left( \frac{\sigma(p_1)}{\sigma(q_1)}-\frac{\sigma(p_2)}{\sigma(q_2)} \right) \right| < \frac{c_K}{|N(q_1q_2)|}
\end{equation}
 also holds true, {proceeding on similar lines as in the proof of \cite[Lemma 4.4]{AGGL}}.  Hence any pairs $(p_j,q_j)$ satisfying \eqref{E4.11} must have a fixed ratio.
\end{proof}
{With the key lemma \ref{L4.4} at our disposal, the complex version of} \cite[Proposition 3.5]{AGGL} is as follows:
\begin{proposition}\label{P3.5}
 $\mathbf{Bad}(K,\mathbf{r})$ is hyperplane-absolute-winning (HAW) in $\C^n$.
\end{proposition}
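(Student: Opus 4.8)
The plan is to carry out the proof of \cite[Proposition 3.5]{AGGL} in the complex setting, with the subdivisions and lemmas of \S4 above doing the combinatorial work. Since a subset of $\C^n$ is HAW if and only if it is HPW (\cite[Theorem C.8]{FSU}), it suffices to fix $0<\beta<\frac{1}{3}$ and $\gamma>0$ and to exhibit a winning strategy for Player A in the $(\beta,\gamma)$-hyperplane-potential game in $\C^n$ with target set $\mathbf{Bad}(K,\mathbf{r})$ (reducing a general HPW game to this range of $\beta$ being standard, cf.\ \cite{FSU}). Relabel so that $\rho_0=\rho(B_0)<1$, pick $R$ so large that both \eqref{E4.4} and $4^n\varepsilon^{2n}<c_K$ hold, and let $\varepsilon$ and the numbers $H_l$ be as in \eqref{E4.5}; then Lemma \ref{L4.4} and the classes \eqref{balls}, \eqref{subd}, \eqref{rsub} are all available. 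By Lemma \ref{L4.1} and \eqref{E4.1}, it is enough for Player A to force the nested intersection $\bigcap_{j} B_j$ either to lie in $\mathbf{Bad}_\varepsilon(K,\mathbf{r})=\C^n\setminus\bigcup_{(p,q)}\Delta_\varepsilon(p,q)$ or to meet one of the hyperplane neighborhoods Player A has played, as both outcomes are wins for A in the HPW game.

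First I would specify Player A's strategy. When Player B's current ball $B_j$ lies in the class $\mathscr{B}_m$ of \eqref{balls}, Player A responds with the countable family $\{\mathcal{H}_{j,l}^{(\delta_{j,l})}:l\ge1\}$, where $\mathcal{H}_{j,l}^{(\delta_{j,l})}$ is a \emph{single} hyperplane neighborhood chosen to contain every box $\Delta_\varepsilon(p,q)$ with $q\in\mathscr{P}_{m+l,l}$ and $\Delta_\varepsilon(p,q)\cap B_j\neq\emptyset$; if $B_j$ lies in no $\mathscr{B}_m$, Player A plays the empty family. That one hyperplane suffices for each pair $(m,l)$ is exactly Lemma \ref{L4.4}: all pairs $(p,q)$ contributing to $\mathscr{P}_{m+l,l}(B_j)$ share a common coordinate ratio, so $\Theta(p/q)$ is the same point $\mathbf{w}\in\C^n$ for all of them; taking $\mathcal{H}_{j,l}$ to be the complex hyperplane $\{\mathbf{z}\in\C^n:z_\omega=w_\omega\}$ cut out by the $\omega$-th coordinate (recall $r_\omega=\max_\sigma r_\sigma>0$, so $\omega\in\Sigma_+$) and $\delta_{j,l}$ slightly larger than the supremum over these pairs of the $\omega$-radii $\varepsilon/(|\sigma_\omega(q)|\,\|q\|_{\mathbf{r}}^{\,r})$, each such $\Delta_\varepsilon(p,q)$ is trapped inside $\mathcal{H}_{j,l}^{(\delta_{j,l})}$ because it already confines the $\omega$-coordinate to within its $\omega$-radius of $w_\omega$. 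This is the single point at which the complex structure intervenes: the sets $\Delta_\varepsilon(p,q)$ are polydiscs --- they have ``disc faces'' in $\C^n$ --- and it is precisely a \emph{complex} hyperplane neighborhood that captures such a polydisc, which is why the game must be the complex hyperplane game (cf.\ the remarks in \S2 and \S4).

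Next I would check that this strategy is legal and winning. Legality is the potential bound $\sum_{l\ge1}\delta_{j,l}^{\gamma}\le(\beta\rho_j)^{\gamma}$ at every move $j$; here one inserts \eqref{E4.9} and Lemma 4.2\eqref{L4.2} into the defining inequalities of $\mathscr{P}_{m+l,l}$ to dominate $\delta_{j,l}$ by a quantity decaying geometrically in $l$, and then sums the resulting series --- the factor $2$ in \eqref{E4.4} and the power $R^{-4n}$ in \eqref{E4.5} are calibrated precisely so that this sum stays below $(\beta\rho_j)^{\gamma}$ uniformly in $j$, exactly as in \cite[\S4--\S5]{AGGL}. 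For the winning claim, $\bigcap_j B_j\neq\emptyset$ as a decreasing intersection of closed balls in $\R^{2n}$; by Lemma 4.2\eqref{L4.3} every $q\in\mathcal{O}_K(\mathbf{r},\varepsilon)$ belongs to some $\mathscr{P}_{m+l,l}$, and the book-keeping showing that the corresponding box $\Delta_\varepsilon(p,q)$ is either removed at the move where Player B's radius first enters $\mathscr{B}_m$ (such a move existing, when the radii shrink, because $\rho_j\ge\beta\rho_{j-1}$) or else already disjoint from every subsequent ball --- so that $\bigcap_j B_j$ avoids $\bigcup_{(p,q)}\Delta_\varepsilon(p,q)$ unless it has already met a slab played by A --- proceeds exactly as in \cite[\S4--\S5]{AGGL}. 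In either case Player A wins the $(\beta,\gamma)$-HP game, so $\mathbf{Bad}(K,\mathbf{r})$ is HPW, hence HAW.

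The main obstacle is the potential estimate $\sum_{l\ge1}\delta_{j,l}^{\gamma}\le(\beta\rho_j)^{\gamma}$: one must extract from membership in $\mathscr{P}_{m+l,l}$ --- i.e.\ from the two-sided control of $H(q)$ and of $\|q\|_{\mathbf{r}}^{2r}$ together with $|N(q)|\ge1$ --- an upper bound on the slab width $\delta_{j,l}$ that decays in $l$ with a ratio small enough for the series to fit the budget, which is where the refined subdivision \eqref{rsub} and the exponent $4n$ in it (and the $R^{-4n}$ in \eqref{E4.5}) earn their keep. Once Lemma \ref{L4.4} is granted --- it is the one genuinely new input, already proved above --- the rest is a faithful transcription of \cite[\S4--\S5]{AGGL}, the only conceptual difference from the real case being the observation, isolated in \S4, that a complex polydisc is swallowed by a complex-hyperplane neighborhood of comparable width, so that the $\beta$-hyperplane-potential game in $\C^n$ is the correct game to run.
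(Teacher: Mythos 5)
Your proposal is correct and takes essentially the same route as the paper, whose proof of Proposition \ref{P3.5} simply imports the argument of \cite[Proposition 3.5]{AGGL} verbatim with \cite[Lemma 4.4]{AGGL} replaced by Lemma \ref{L4.4}. The details you spell out --- the reduction to the $(\beta,\gamma)$-HP game via \cite[Theorem C.8]{FSU}, one complex hyperplane neighborhood per class $\mathscr{P}_{m+l,l}$ justified by the constant-ratio Lemma \ref{L4.4}, the polydisc being swallowed by a complex-hyperplane neighborhood in the $\omega$-coordinate, and the geometric potential estimate calibrated by \eqref{E4.4} and \eqref{E4.5} --- are precisely the content being transcribed from \cite{AGGL}.
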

 \begin{proof}
 The proof is identical to \cite[Proposition 3.5]{AGGL} with the use of \cite[Lemma 4.4]{AGGL} replaced by Lemma \ref{L4.4} above.
 \end{proof}

\begin{proof}[Proof of Proposition \ref{P3.1}]
Using Proposition \ref{P3.5}, this follows identically to the proof of \cite[Proposition 3.1]{AGGL}.
\end{proof}

\section{Images of HAW sets in complex manifolds under biholomorphisms}{\label{Biholo}}
In this section, we obtain a complex analogue of \cite[Theorem 2.4]{BFKRW}:

\begin{proposition}{\label{L214.223}}\hfill
 \begin{enumerate}
  \item Let $\phi:\C^n\to\C^n$ be a biholomorphism.  Then $\phi(E)$ is HAW whenever $E$ is.
  \item If $\phi:M\to N$ is a biholomorphism between complex manifolds, then $\phi(E)$ is a HAW subset of $N$ whenever $E\subset M$ is HAW.
 \end{enumerate}
\end{proposition}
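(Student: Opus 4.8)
The plan is to mimic the proof of \cite[Theorem 2.4]{BFKRW} but replace the role of affine maps of $\R^n$ by biholomorphisms of $\C^n$, the key point being that a biholomorphism is well-approximated near any point by its (complex-linear) derivative, and that complex-linear maps send complex hyperplanes to complex hyperplanes. First I would reduce part (2) to part (1): given $\phi:M\to N$ biholomorphic and charts $(U_\alpha,\varphi_\alpha)$ of $M$, $(V_\beta,\psi_\beta)$ of $N$, the set $\psi_\beta(\phi(E)\cap V_\beta)$ is the image of the HAW set $\varphi_\alpha\big(E\cap\phi^{-1}(V_\beta)\cap U_\alpha\big)$ under the transition biholomorphism $\psi_\beta\circ\phi\circ\varphi_\alpha^{-1}$ between open subsets of $\C^n$; invoking Lemma \ref{L2.2}(3) to pass between open covers, it suffices to know that biholomorphisms between open subsets of $\C^n$ preserve HAW, and a standard localization (cover the domain by small balls, use Lemma \ref{L2.2}(3) again) reduces this to the global statement in part (1), or rather to a local version of it. So the crux is part (1), and more precisely the following local assertion: if $\phi$ is a biholomorphism defined near a point and $E$ is HAW, then $\phi(E)$ satisfies the HAW condition for games whose initial ball $B_0$ is small.

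For part (1) itself, I would set up the transference between an HA game for the target set $\phi(E)$ and an HA game for $E$, exactly as in \cite{BFKRW}. Fix $\beta\in(0,\tfrac13)$; by Lemma \ref{L2.1}(2) it is enough to win the $\beta$-game for $\beta$ close to $\tfrac13$, or one can argue for all $\beta$ with a parameter-comparison. Player B's sequence $B_0\supset B_1\supset\cdots$ in the $\phi(E)$-game is pulled back: since $\phi$ is a diffeomorphism, $\phi^{-1}(B_j)$ is a (topological) ball which is comparable to a Euclidean ball $\tilde B_j$ with $\rho(\tilde B_j)\asymp \|D\phi^{-1}\|\,\rho(B_j)$ up to a multiplicative error that tends to $1$ as the balls shrink — here I use that $\phi^{-1}$ is $C^1$ with $D\phi^{-1}$ continuous and invertible (the derivative is nonsingular because $\phi$ is a biholomorphism). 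Player A in the $E$-game responds to $\tilde B_j$ with a hyperplane neighborhood $\mathcal H_j^{(\delta_j)}$ (using that $E$ is HAW); we then push it forward. The image $\phi(\mathcal H_j)$ is a smooth hypersurface, not a hyperplane, but inside the small ball $B_j$ it is contained in a neighborhood of a genuine complex affine hyperplane $\mathcal H_j'$ — the tangent hyperplane to $\phi(\mathcal H_j)$ — of radius $\delta_j' \le C\|D\phi\|\,\delta_j + (\text{quadratic error})\cdot\rho(B_j)^2$, where the quadratic error comes from the second-order Taylor remainder of $\phi$. One must check $\delta_j'\le\beta\rho(B_{j+1}$-side$)$; this is where the requirement that $B_0$ be small enters, since the quadratic term is then dominated by the linear one. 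The fact that the tangent hyperplane is a \emph{complex} hyperplane is exactly where holomorphy is essential: $D\phi$ is complex-linear, so it carries the complex hyperplane tangent to $\mathcal H_j$ to a complex hyperplane, which a merely real-differentiable map would not do. Iterating, Player A's winning point $\xi\in E\cap\bigcap_j\tilde B_j$ maps to $\phi(\xi)\in\phi(E)\cap\bigcap_j B_j$, so $\phi(E)$ is $\beta$-HAW; letting $\beta$ vary gives HAW.

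The main obstacle is the bookkeeping of the distortion estimates: one must simultaneously control (a) the comparison of $\rho(\phi^{-1}(B_j))$ with a round ball so that the pulled-back moves are legal $\beta'$-moves for a slightly worse $\beta'$, and (b) the linearization error, showing $\phi(\mathcal H_j^{(\delta_j)})\cap B_j\subset (\mathcal H_j')^{(\delta_j')}$ with $\delta_j'$ small enough. Both are quantitative consequences of $\phi\in C^2$ (indeed $C^\infty$, being holomorphic) with nonvanishing Jacobian, combined with the geometric decay $\rho(B_{j+1})\ge\beta\rho(B_j)$ which forces $\rho(B_j)\to 0$ and hence makes all second-order remainders negligible relative to first-order terms once $\rho_0$ is small; the small-$\rho_0$ restriction is then removed by the localization argument (Lemma \ref{L2.2}(3)) described in the first paragraph. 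I would also remark, following \cite[Lemma 2.2]{AGGL} and \cite[Prop.\ 2.3(c)]{BFKRW}, that this is the holomorphic analogue of the $C^1$-diffeomorphism invariance used in the real setting, and that everything is otherwise formally identical to \cite{BFKRW}; the only genuinely new ingredient is the observation that complex-linear derivatives preserve complex hyperplanes, which is what makes the HA game (with complex hyperplanes) the right notion to be stable under biholomorphisms.
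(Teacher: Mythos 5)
Your proposal is correct and follows essentially the same route as the paper: a game-transference argument in which Player B's balls are transported through the biholomorphism, Player A's hyperplane neighborhoods are transported through the (complex-affine) linear approximation of $\phi$ or $\phi^{-1}$ at a point — which is exactly where holomorphy guarantees that complex hyperplanes go to complex hyperplanes — and the linearization and distortion errors are absorbed by shrinking the game parameter (the paper takes $\beta'=\beta^m$ and passes to a subsequence of B's moves) and by waiting until the balls are small; part (2) is then reduced to part (1) via charts, as you do. The only cosmetic difference is that the paper runs the transference in the direction showing $\phi^{-1}(E)$ is HAW, which is equivalent since $\phi$ is invertible.
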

\begin{proof}
\begin{enumerate}
 \item \textbf{Step 1}: We first show that the local distortion of $\phi$ is bounded.  For this, let $J_{\mathbf{z}}:=d_{\mathbf{z}}\phi$ be the derivative of $\phi$ at $\mathbf{z}$.  For any closed ball $B\subset\C^n$ and any $\bs{\xi},\bs{\eta}\in B$, we have
 \[ \| \phi(\bs{\xi}) - \phi(\bs{\eta})\|\leq\sup_{\mathbf{z}\in B}\|J_{\mathbf{z}}\|_{\mathrm{op}}\|\bs{\xi}-\bs{\eta}\|, \]
 where $\|\cdot\|_{\mathrm{op}}$ is the operator norm.  Let the linear approximations to $\phi$, $\phi^{-1}$ at $\bs{\xi}_0\in B$, $\bs{\eta}_0\in\phi(B)$, respectively, be $L_{\bs{\xi}_0}(\bs{\xi})=\phi(\bs{\xi}_0)+J_{\bs{\xi}_0}(\bs{\xi}-\bs{\xi}_0)$, $\widetilde{L}_{\bs{\eta}_0}(\bs{\eta})=\phi^{-1}(\bs{\eta}_0)+J_{\bs{\eta}_0}^{-1}(\bs{\eta}-\bs{\eta}_0)$.  We claim that
 \begin{equation}{\label{Eq3.1}}
  \sup_{\bs{\eta},\bs{\eta}_0\in\phi(B)}\dfrac{\|\widetilde{L}_{\bs{\eta}_0}(\bs{\eta})-\phi^{-1}(\bs{\eta})\|}{\|\bs{\eta}-\bs{\eta}_0\|}\to 0 \quad\text{as}~~\rho(B)\to 0.
 \end{equation}
For $\bs{\zeta}\in\phi(B)$, define $h_{\bs{\eta}_0}(\bs{\zeta})=\phi^{-1}(\bs{\zeta})-\widetilde{L}_{\bs{\eta}_0}(\bs{\zeta})$, so that $h_{\bs{\eta}_0}$ is differentiable with $d_{\bs{\zeta}}h_{\bs{\eta}_0}=J_{\bs{\zeta}}^{-1}-J_{\bs{\eta}_0}^{-1}$.  Since $\phi$ is biholomorphic, it is locally nonsingular (that is, $J_{\mathbf{z}}$ has full rank in a neighborhood of $\mathbf{z}$), so $\sup_{\bs{\zeta},\bs{\eta}_0\in\phi(B)}\|d_{\bs{\zeta}}h_{\bs{\eta}_0}\|\to 0$ as $\rho(B)\to 0$.  But
\begin{align*}
 \dfrac{\|\widetilde{L}_{\bs{\eta}_0}(\bs{\eta})-\phi^{-1}(\bs{\eta})\|}{\|\bs{\eta}-\bs{\eta}_0\|} &= \dfrac{\|\widetilde{L}_{\bs{\eta}_0}(\bs{\eta})-\widetilde{L}_{\bs{\eta}_0}(\bs{\eta}_0)+\phi^{-1}(\bs{\eta}_0)-\phi^{-1}(\bs{\eta})\|}{\|\bs{\eta}-\bs{\eta}_0\|} \\
 &= \dfrac{\|h_{\bs{\eta}_0}(\bs{\eta}_0)-h_{\bs{\eta}_0}(\bs{\eta})\|}{\|\bs{\eta}-\bs{\eta}_0\|} \leq \sup_{\bs{\zeta}\in\phi(B)}\|d_{\bs{\zeta}}h_{\bs{\eta}_0}\|_{\mathrm{op}},
\end{align*}
which proves the claim \eqref{Eq3.1}.

\textbf{Step 2}: Consider two $HA$ games $\mathcal{G},\mathcal{G}'$ in $\C^n$, where $\mathcal{G}'$ has target set $E$ and parameter $\beta'$, while $\mathcal{G}$ has target set $\phi^{-1}(E)$ with parameter $\beta$.  Suppose that Player A has a winning strategy in $\mathcal{G}'$, which they ``bring over'' to $\mathcal{G}$ as follows: after each move $B_j$ of Player B in game $\mathcal{G}$, Player A uses the biholomorphism $\phi$ to construct the ball $B'_j$ for game $\mathcal{G}'$, then makes the move $A'_j$ using the winning strategy, then uses $\phi^{-1}$ to construct a legal move $A_j$ for $\mathcal{G}$.  We claim that the assurance of Player A winning $\mathcal{G}'$ results in them winning $\mathcal{G}$ as well.

Let $B_1$ be the initial closed ball chosen by Player $B$ in game $\mathcal{G}$.  Set
\[ c_1:=\sup_{\bs{\xi}\in B_1}\|J_{\bs{\xi}}\|_{\mathrm{op}}, \quad c_2:=\sup_{\bs{\xi}'\in\phi(B_1)}\|J^{-1}_{\bs{\xi}'}\|_{\mathrm{op}}, \]
and choose $m$ large enough so that
\begin{equation}\label{Eq3.2}
 c(\beta+1)\beta^{m-2}<1,\qquad\text{where}~c:=2c_1c_2.
\end{equation}
Let $\beta'=\beta^m<\frac{1}{3}$.  Assume that, after sufficiently many moves, Player A can enforce
\[ \sup_{\bs{\eta},\bs{\eta}'\in\phi(B)}\dfrac{\|\widetilde{L}_{\bs{\eta}_0}(\bs{\eta})-\phi^{-1}(\bs{\eta})\|}{\|\bs{\eta}-\bs{\eta}_0\|}<2\beta' c_2\left(1+\dfrac{1}{\beta} \right) \]
corresponding to the move $B$ by Player B.  We relabel the index in such a way that the first such move $B$ is labeled as $B_1$.

Using the winning strategy for $\mathcal{G}'$, Player A inductively constructs moves \[ B'_j\supset B'_j\setminus A'_j \supset B'_{j+1}\supset\dots;\] at the same time, they construct a sequence of indices $1=j_1<j_2<\dots$ such that
\begin{equation}\label{Eq3.3}
 \phi(B_{j_k})\subset B'_k \quad\text{for~all}~k.
\end{equation}
This will imply that
\[ \phi\left( \bigcap_k B_k \right) = \phi\left( \bigcap_k B_{j_k} \right) = \bigcap_k \phi\left( B_{j_k} \right) \subset\bigcap_k B'_k\subset E, \]
so that $\bigcap_k B_k \in\phi^{-1}(E)$, and Player A wins $\mathcal{G}$.

To enable the above construction, we need to assume the following:
\begin{enumerate}[label=(\roman*)]
 \item The sets $A_j,B_j$ and $A'_j,B'_j$ are legal moves in games $\mathcal{G}$ and $\mathcal{G}'$, respectively,
 \item If \[ B_{j_k}=B(\mathbf{z}_k,\rho_k),B'_k=B(\phi(\mathbf{z}_k),\rho'_k) \quad\text{with}~\rho'_k=c_1\rho_k, \]
 then
 \begin{equation}\label{Eq3.4}
  \beta^m\leq\dfrac{\rho_k}{\rho_{k-1}}<\beta^{m-1}, \quad\text{where}~m~\text{is~as~in}~\eqref{Eq3.2}.
 \end{equation}
\item The moves $A'_k=\mathcal{H}_k'^{(\varepsilon'_k)}$ result from Player A's winning strategy in $\mathcal{G}'$, and
\[ A_{j_k}\supset\phi^{-1}\left( \mathcal{H}_k'^{(\alpha\rho'_k)} \right), \quad\text{where}~\alpha=\beta'\left( 1+\dfrac{1}{\beta} \right). \]
\end{enumerate}
Suppose that, with the above assumptions, we have made moves up to stage $l-1$.  Let Player A make arbitrary moves until Player B's move $B_\lambda$ satisfies $\dfrac{\rho(B)}{\rho_{l-1}}<\beta^{m-1}$, whereby set $j_l=\lambda$.  Since $\rho(B)\to0$, $j_l$ are well-defined; moreover, the choice of $j_l$ ensures that \eqref{Eq3.4} holds for $k=l$.  Now let $B'_l=B(\phi(\mathbf{z}_l),\rho'_l)$.  By \eqref{Eq3.4} and the definition of $\rho'_k$,
\begin{equation}\label{Eq3.5}
 \beta'=\beta^m\leq\dfrac{\rho_l}{\rho_{l-1}}=\dfrac{\rho'_l}{\rho'_{l-1}}<\beta^{m-1}=\dfrac{\beta'}{\beta}.
\end{equation}
Since $B_{j_l}\subset B_{j_{l-1}}$, we must have $\|\mathbf{z}_{l-1}-\mathbf{z}_l\|\leq \rho_{l-1}-\rho_l$; thus
\[ \|\phi(\mathbf{z}_{l-1})-\phi(\mathbf{z}_l)\|\leq c_1\|\mathbf{z}_{l-1}-\mathbf{z}_l\|\leq c_1(\rho_{l-1}-\rho_l)= \rho'_{l-1}-\rho'_l. \]
This shows that $B'_l\subset B'_{l-1}$.  Since $B_{j_l}\cap A_{j_{l-1}}=\emptyset$ and $A_{j_{l-1}}\supset \phi^{-1}\left( \mathcal{H}_{l-1}'^{(\alpha\rho'_{l-1})} \right)$, \eqref{Eq3.5} implies that
\[  \inf_{\mathbf{w}\in\mathcal{H}'_{l-1}}\|\phi(\mathbf{z}_l)-\mathbf{w} \| \geq\alpha\rho'_{l-1}=\beta'\left( 1+\dfrac{1}{\beta} \right)\rho'_{l-1}>\beta'\rho'_{l-1}+\rho'_l; \]
that is, $B'_l\cap\mathcal{H}_{l-1}'^{(\beta'\rho'_{l-1})}=\emptyset$, and so $B'_l\subset B'_{l-1}\setminus A'_{l-1}$.  Hence $B'_l$ is a legal move in $\mathcal{G}'$.

Choose $A'_l$ as per the winning strategy for $\mathcal{G}'$; say $A'_l=\mathcal{H}_l'^{(\varepsilon'_l)}$ for some $\varepsilon'_l\leq\beta'\rho'_l$.  Fix $\mathbf{w}'\in\mathcal{H}'_l$, and define $\mathcal{H}_l:=\widetilde{L}_{\mathbf{w}'}(\mathcal{H}'_l)$.  For any $\mathbf{w}\in B'_l$ with $\inf_{\bs{\omega}\in\mathcal{H}'_l}\|\mathbf{w}-\bs{\omega}\|<\alpha\rho'_l$, write $\mathbf{w}=\mathbf{w}_0+\widetilde{\mathbf{w}}$ with $\mathbf{w}_0\in\mathcal{H}'_l$.  The choice of the initial (relabeled) ball $B_1$ gives
\begin{align*} \|\phi^{-1}(\mathbf{w})-\widetilde{L}_{\mathbf{w}'}(\mathbf{w}_0))\| &\leq \|\phi^{-1}(\mathbf{w})-\phi^{-1}(\mathbf{w}_0)\|+\|\phi^{-1}(\mathbf{w}_0)-\widetilde{L}_{\mathbf{w}'}(\mathbf{w}_0))\| \\  &\leq 2c_2\alpha\rho'_l. \end{align*}
Setting
\[ \varepsilon_l:= 2c_2\alpha\rho'_l=c\alpha\rho_l, \]
we have
\[ A_{j_l}:=\mathcal{H}_l^{(\varepsilon_l)}\supset \phi^{-1}\left( \mathcal{H}_{l-1}'^{(\alpha\rho'_{l-1})} \right). \]
By \eqref{Eq3.2}, we have $c\alpha<\beta$, whereby $\varepsilon_l<\beta\rho_l$; that is, the move $A_{j_l}$ is legal in $\mathcal{G}$ and satisfies (iii) above.  This inductive step thus enables us to do the required construction of moves for Player A that wins the game $\mathcal{G}$ for them.
 \item  Let $(U,\varphi)$ and $(V,\psi)$ be admissible charts in the holomorphic systems on the complex manifolds $M$ and $N$, respectively, say with $U\subset\C^m$, $V\subset\C^n$.
 \[\begin{tikzcd}
U \arrow[dashed]{r}{\widetilde{\phi}} \arrow[swap]{d}{\varphi} & V \arrow{d}{\psi} \\
M \arrow{r}{\phi} & N
\end{tikzcd}
\] 
   By the definition of HAW sets on complex manifolds (cf. \S \ref{SecHAW}), $\varphi^{-1}(E)\cap U$ is HAW in $U$.  Using (1), $\widetilde{\phi}(\varphi^{-1}(E) \cap U)$ is HAW in $V$, and so $\psi\circ\widetilde{\phi}\circ\varphi^{-1}(E)=\phi(E)$ is HAW in $N$.
 \end{enumerate}
 \end{proof}

\section{Proof of the Main Theorem}
In view of the fact that finite covering maps preserve manifold structure, the complex version of \cite[Lemma 5.1]{AGGL} follows from Proposition \ref{L214.223} in {an} exactly analogous manner.
\begin{lemma}\label{L5.1}
 Let $\Gamma$ and $\Gamma'$ be mutually commensurable lattices in $G$.  Then for any subgroup $F$ of $G$, $E(F)$ is HAW on $G/\Gamma$ if and only if $E(F)$ is HAW on $G/\Gamma'$.
\end{lemma}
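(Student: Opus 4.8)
The plan is to reduce to the case of a finite-index inclusion of lattices and then exploit the fact that a finite-sheeted holomorphic covering is a proper local biholomorphism, so that the local characterization of HAW sets (Lemma \ref{L2.2}(3)) together with biholomorphism-invariance (Proposition \ref{L214.223}) does all the work. First I would note that if $\Gamma$ and $\Gamma'$ are commensurable, then $\Lambda:=\Gamma\cap\Gamma'$ has finite index in each of $\Gamma,\Gamma'$, and a finite-index subgroup of a lattice is again a lattice in $G$. Hence it suffices to prove the statement when $\Gamma'=\Lambda<\Gamma$ has finite index; applying this twice (to $\Lambda<\Gamma$ and $\Lambda<\Gamma'$) yields the general case.

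So assume $\Lambda<\Gamma$ with $[\Gamma:\Lambda]<\infty$, and let $\pi:G/\Lambda\to G/\Gamma$, $g\Lambda\mapsto g\Gamma$, be the natural projection. Since $G=\prod_{\sigma\in\Sigma}\mathrm{SL}_2(\C)$ is a complex Lie group and $\Gamma,\Lambda$ are discrete, $G/\Lambda$ and $G/\Gamma$ inherit complex structures from $G$ and $\pi$ is a holomorphic, $G$-equivariant, finite-sheeted covering map; in particular $\pi$ is proper and is a local biholomorphism. The key point is the identity
\[
E(F)\ \text{in}\ G/\Lambda\ =\ \pi^{-1}\bigl(E(F)\ \text{in}\ G/\Gamma\bigr).
\]
Indeed, $G$-equivariance gives $\pi(Fx)=F\pi(x)$ for every $x\in G/\Lambda$, and since $\pi$ is continuous and proper, $Fx$ is contained in a compact subset of $G/\Lambda$ if and only if $F\pi(x)=\pi(Fx)$ is contained in a compact subset of $G/\Gamma$; thus $E(F)$ on $G/\Lambda$ is precisely the $\pi$-saturation of $E(F)$ on $G/\Gamma$.

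It remains to patch, using Lemma \ref{L2.2}(3) in both directions; here I would first record the elementary remark that the restriction of a HAW set to an open subset is HAW, which is just the ``only if'' half of Lemma \ref{L2.2}(3) applied to the cover $\{W,M\}$ of a manifold $M$. For ``$E(F)$ HAW on $G/\Gamma\Rightarrow E(F)$ HAW on $G/\Lambda$'': cover $G/\Lambda$ by open sets $U'$ small enough that $\pi|_{U'}:U'\to U:=\pi(U')$ is a biholomorphism onto an open set; then $E(F)\cap U'=(\pi|_{U'})^{-1}(E(F)\cap U)$, the set $E(F)\cap U$ is HAW in $U$ by the restriction remark, Proposition \ref{L214.223} gives that $E(F)\cap U'$ is HAW in $U'$, and Lemma \ref{L2.2}(3) concludes. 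For the converse, cover $G/\Gamma$ by evenly covered open sets $U$ so that $\pi^{-1}(U)=\bigsqcup_{i=1}^{k}U'_i$ with each $\pi|_{U'_i}:U'_i\to U$ a biholomorphism; then $E(F)\cap U'_i$ is HAW in $U'_i$ by the restriction remark, hence $\pi|_{U'_i}\bigl(E(F)\cap U'_i\bigr)=E(F)\cap U$ is HAW in $U$ by Proposition \ref{L214.223}, and Lemma \ref{L2.2}(3) again concludes.

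I do not anticipate a genuine obstacle: the only steps requiring care are the verification that $\pi^{-1}(E(F))=E(F)$ (which uses properness and $G$-equivariance of $\pi$) and the observation that a finite holomorphic covering of these quotients is a local biholomorphism (immediate because $G$ is a complex Lie group acting holomorphically); everything else is a bookkeeping application of the two results already established. This is exactly the complex-analytic counterpart of \cite[Lemma 5.1]{AGGL}, with $C^1$-diffeomorphism invariance there replaced by the biholomorphism invariance of Proposition \ref{L214.223}.
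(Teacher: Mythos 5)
Your proposal is correct and follows exactly the route the paper intends: the paper's proof is a one-line reduction to the finite covering map $G/(\Gamma\cap\Gamma')\to G/\Gamma$ being a local biholomorphism, combined with Proposition \ref{L214.223} and the local characterization of HAW sets, which is precisely what you carry out in detail (including the necessary identity $E(F)=\pi^{-1}(E(F))$ via properness and equivariance).
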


Before embarking on the proof of the main theorem, we record the $\mathrm{SL}_2(\C)$ analogue of \cite[Theorem 3.6]{KW17}:

\begin{proposition}\label{T1.1_single}
 Let $\Gamma$ be a lattice in $G=\mathrm{SL}_2(\C)$ and $F$ a one-parameter Ad-semisimple subgroup of $G$.  Then
 \[E(F):=\{x\in G/\Gamma\,:\,Fx~\text{is~bounded}\} \]
 is HAW.
\end{proposition}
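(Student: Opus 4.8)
The plan is to strip away a chain of soft reductions until only a one-complex-dimensional Diophantine statement remains, and then to win the hyperplane-absolute game for that, using the biholomorphism-invariance proved in \S\ref{Biholo} to patch charts. \emph{Reductions to the standard forward diagonal flow.} For $h\in G$, left translation $L_h\colon x\Gamma\mapsto hx\Gamma$ is a biholomorphism of the complex manifold $G/\Gamma$ and $L_h\bigl(E(h^{-1}Fh)\bigr)=E(F)$, so by Proposition \ref{L214.223}(2) we may replace $F$ by any $G$-conjugate of it. Writing $F=\{\exp(tX)\}$ with $X$ Ad-semisimple, $X$ is $\mathrm{SL}_2(\C)$-conjugate to $\operatorname{diag}(a,-a)$ for some $a\in\C$; if $a=0$ then $F$ is trivial, and if $\operatorname{Re}(a)=0$ then $F$ is conjugate into the compact group $\mathrm{SU}(2)$, so $Fx$ is bounded for every $x$ and $E(F)=G/\Gamma$ is HAW. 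Otherwise, after conjugating and rescaling the parameter (which does not change $E(F)$) we may write $g_t=a_tk_t$ with $a_t=\operatorname{diag}(e^{t},e^{-t})$ and $k_t=\operatorname{diag}(e^{i\theta t},e^{-i\theta t})\in\mathrm{SU}(2)$ commuting with $a_t$; since $k_t$ is bounded, $\{g_tx\}$ is bounded iff $\{a_tx\}$ is, so $E(F)=E(A)$ with $A=\{a_t:t\in\R\}$. Finally $E(A)=E(A^+)\cap E(A^-)$ where $A^{\pm}=\{a_t:\pm t\ge0\}$; conjugation by the Weyl element, which exchanges $a_t\leftrightarrow a_{-t}$, shows $E(A^-)$ is HAW iff $E(A^+)$ is, and a finite intersection of HAW sets is HAW by Lemma \ref{L2.2}(2). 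Thus it suffices to prove that $E(A^+)$ is HAW.

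\emph{Localization to one complex dimension.} By Lemma \ref{L2.2}(3) it is enough to show $E(A^+)$ is HAW in a neighborhood of each $y_0\in G/\Gamma$. Fix a lift $\tilde y_0\in G$ and use the local biholomorphic chart $(v^-,v^0,v^+)\mapsto u^-(v^-)\,v^0\,u^+(v^+)\,\tilde y_0\Gamma$, where $u^{\pm}\colon\C\to U^{\pm}$ are the standard isomorphisms onto the contracting and expanding horospherical subgroups of $a_t$ and $v^0$ ranges over a small neighborhood in the diagonal torus $T$ (each of $U^{\pm}$, $T$ is a complex $1$-manifold). Because $a_t$ contracts $U^-$ as $t\to+\infty$, centralizes $T$, and left translation by a fixed element of $G$ is a homeomorphism, the forward $A^+$-orbit of a point in the chart stays within bounded distance of the forward orbit of $u^+(v^+)\tilde y_0\Gamma$; hence inside the chart $E(A^+)$ is the product $\mathcal W\times(\text{domain in }U^-)\times(\text{domain in }T)$, where $\mathcal W:=\{z\in\C:A^+u^+(z)\tilde y_0\Gamma\text{ is bounded}\}$. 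By Lemma \ref{L2.2}(4) it now remains to show that $\mathcal W$ is HAW in $\C$.

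\emph{The one-dimensional statement.} In $\C\cong\C^{1}$ a complex hyperplane is a point, so a hyperplane neighborhood is a Euclidean disc and the hyperplane-absolute game coincides with McMullen's absolute game played with discs. If $G/\Gamma$ is compact then $\mathcal W=\C$ and we are done; otherwise, $G/\Gamma$ being a finite-volume quotient of $\mathrm{SL}_2(\C)$, it is (like the associated hyperbolic $3$-orbifold $\aech^3/\Gamma$) the union of a compact part with finitely many cusp neighborhoods, and $A^+u^+(z)\tilde y_0\Gamma$ is unbounded precisely when it enters one of these cusps to arbitrarily large depth. For each cusp, the set of $z$ for which depth $\ge D$ is attained at time $\approx s$ is, in the $z$-coordinate, a countable union of discs centered at the $\Gamma$-translates of the parabolic fixed point and of radius comparable to $e^{-2s}$ (the analogues of Ford discs; the cusp stabilizer is virtually $\Z^2$ acting on $\C$ by translations, so these really are two-real-dimensional discs). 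The crucial input is the exact analogue of Lemma \ref{L4.4}: any two such discs of comparable radius that both meet a single ball of that radius must coincide, which follows from discreteness of $\Gamma$. With this in hand Player A wins the game for $\mathcal W$ in McMullen's manner: at the $j$-th stage at most one cusp disc of radius $\asymp\rho_j$ meets Player B's ball $B_j$, and Player A deletes a disc of radius $\le\beta\rho_j$ about its center; running this over all scales forces the limit point to remain a bounded distance inside every cusp, that is, to lie in $\mathcal W$.

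\emph{The main obstacle.} The work is concentrated in the last step. For a general---possibly non-arithmetic---lattice $\Gamma<\mathrm{SL}_2(\C)$ the arithmetic coordinates used around Lemma \ref{L4.4} are unavailable, so the disc structure of cusp excursions and the separation statement replacing Lemma \ref{L4.4} must be extracted from the hyperbolic geometry of the cusp (the Margulis lemma and the structure of parabolic subgroups of $\mathrm{SL}_2(\C)$), exactly as in the real case \cite[Theorem 3.6]{KW17}. Apart from this, the argument is a transcription of the real proof with $\R$ replaced by $\C$ and intervals by discs; the only genuinely complex-analytic ingredient is Proposition \ref{L214.223}, which supplies the biholomorphism-invariance needed both to patch the charts above and to descend the conclusion from $\C$ back to $G/\Gamma$.
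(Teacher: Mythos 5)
Your reduction chain (conjugation via Proposition \ref{L214.223}(2), splitting off the bounded $\mathrm{SU}(2)$-part, passing to $E(A^+)\cap E(A^-)$ and using the Weyl element plus Lemma \ref{L2.2}) and your localization step (the $U^-\times T\times U^+$ chart in which $E(A^+)$ is a product because $a_t$ contracts $U^-$ and centralizes $T$) match the structure of the proof in the paper, which phrases the same localization as a foliation of $\mathfrak{g}=\mathfrak{h}^+\oplus\mathfrak{h}^-\oplus\mathfrak{f}$ and a projection $\pi$ onto $\mathfrak{h}^+\cong\C$. Where you genuinely diverge is in the one-complex-dimensional input. The paper obtains the winning property of $\{z\in\C:F^+h_zx\ \text{bounded}\}$ by invoking the Dani correspondence of Proposition \ref{P3.4} together with Proposition \ref{P3.5}, i.e.\ by identifying this set with a $\mathbf{Bad}(K,\mathbf{r})$-type set and using the arithmetic machinery of \S3--\S4; you instead propose the geometric route via horoball shadows (Ford discs) and a disjointness/separation statement in the spirit of \cite{KW17}. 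Your route is arguably the more robust one: the proposition is stated for an arbitrary lattice in $\mathrm{SL}_2(\C)$, and rank-one non-arithmetic lattices exist, for which the number-field correspondence is unavailable, whereas the horoball-packing argument works uniformly. What the paper's route buys is that the hard estimate has already been carried out (Lemma \ref{L4.4}); what your route buys is generality.

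The one point you should not leave implicit is the separation lemma itself. You assert that two cusp-excursion discs of comparable radius meeting a common ball of that radius must coincide and that this ``follows from discreteness of $\Gamma$,'' but in your write-up this is exactly the analogue of Lemma \ref{L4.4} and it carries all the weight of the proof; as written it is a deferral to \cite{KW17} rather than an argument. To close it, fix a $\Gamma$-invariant collection of pairwise disjoint horoballs (Margulis lemma), note that the shadow on $\widehat{\C}=\partial\aech^3$ of a horoball of Euclidean height $h$ based at $\xi$ is a round disc of radius comparable to $h$, and observe that two disjoint horoballs whose shadows have comparable radii and overlap a ball of that radius would have to intersect; one must also say a word about choosing the chart so that $z\mapsto u^+(z)\tilde y_0$ parametrizes boundary points with bounded distortion. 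With that lemma in place your McMullen-style strategy (delete the unique dangerous disc at each scale, one game per cusp, then intersect finitely many HAW sets) is correct.
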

\begin{proof}
 Following \cite{KW17}, let $\mathfrak{g}=\mathfrak{sl}_2(\C)\cong\C^3$.  Let $X=\mathrm{SL}_2(\C)$ be endowed with a holomorphic structure through an atlas $\{(U_x,\varphi_x):x\in X\}$ as follows: for $x\in X$, define
 \[ \mathrm{exp}_x:\mathfrak{g}\to X, \quad \mathbf{x}\mapsto \mathrm{exp}(x)\mathbf{x}. \]
 Let $W_x$ be a neighborhood of $\mathbf{0}\in\mathfrak{g}$ such that $\mathrm{exp}_x\big|_{W_x}$ is injective.  Define $U_x=\mathrm{exp}_x(W_x)$ and $\varphi_x=\mathrm{exp}_x^{-1}\big|_{W_x}$, so that $\{(U_x,\varphi_x):x\in X\}$ is an atlas on $X$ giving the required structure.
 
 Let $H^+=\left\{h_z=\begin{pmatrix}1 & z \\ 0 & 1\end{pmatrix} \right\}$ and $H^-=\left\{h_z=\begin{pmatrix}1 & 0 \\ z & 1\end{pmatrix} \right\}$ denote, respectively, the upper- and lower-triangular subgroups of $\mathrm{SL}_2(\C)$, with the respective Lie algebras $\mathfrak{h}^+$ and $\mathfrak{h}^-$, and let $\mathfrak{f}$ denote the Lie algebra of $F$.  Assume that, by a suitable composition of $\varphi_x$ with a biholomorphism of $W_x$, the connected-component foliation of $U_x$ is mapped to the foliation of $\mathfrak{g}$ by translates of $\mathfrak{p}=\mathfrak{h}^-\oplus\mathfrak{f}$.  Let $\pi:\mathfrak{g}\to\mathfrak{h}^+$ denote the projection from $\mathfrak{g}=\mathfrak{h}^+\oplus\mathfrak{p}$, and let $W_x^+=\pi(W_x)$.
 
Now consider a Schmidt game played on $\mathfrak{g}$ with Player B's first move $B_1\subset W_x$ for some $x\in X$, where Player A has a winning strategy in the corresponding Schmidt game on $\mathfrak{h}\cong\C$ with target set $\{z\in\C:F^+h_zx~\text{is~bounded}\}$ (cf. Proposition \ref{L214.223}).  After each move $B_j$ by B in the $\mathfrak{g}$-game, Player A takes $B'_j=\pi(B_j)$, constructs a closed disk $A'_j\in\mathfrak{h}$ using the winning strategy in the $\mathfrak{h}$-game, and then makes the move $A_j=\pi^{-1}(A'_j)$, which is a hyperplane neighborhood in $\mathfrak{g}$.  The assurance of Player A winning the $\mathfrak{h}$-game implies their winning the $\mathfrak{g}$-game as well.  Proposition \ref{P3.4} then implies the result.
\end{proof}

\begin{proof}[Proof of Theorem \ref{T1.1_AGGL}]
 Suppose that $\Gamma$ is a lattice in $G$, where $G$ is a product of (finitely many) copies of $\mathrm{SL}_2(\C)$.  By \cite[Theorem 5.22]{MSR}, $G$ decomposes as $G=\prod_{j=1}^kG_j$ where each $G_j$ is a product of copies of $\mathrm{SL}_2(\C)$, such that there are irreducible lattices $\Gamma_j\subset G_j$ with $\Gamma$ being commensurable with $\prod_{j=1}^k \Gamma_j$.  Lemma \ref{L5.1} then reduces this to the case where $\Gamma=\prod_{j=1}^k \Gamma_j$.  As the boundedness of an orbit in $G/\Gamma$ is equivalent to the boundedness of each of its projections in $G_j/\Gamma_j$, we can consider the case of $\Gamma$ being irreducible and non-cocompact using Lemma \ref{L2.2}(4).  By the Margulis arithmeticity theorem \cite[Chapter IX, Theorem 1.9A]{Mar}, $\Gamma$ is commensurable with $\mathbf{G}(\Z[i])$ for a $\Q$-simple semisimple group $\mathbf{G}$; that is, $\Gamma$ is arithmetic.  Furthermore, for some totally imaginary field $K$, $\mathbf{G}=\mathrm{Res}_{K/\Q}\mathbf{G}'$, where $\mathbf{G}'$ is a $K$-form on $\mathrm{SL}_2$.  The noncompactness of $\Gamma$ implies that $\mathbf{G}'$ is $K$-isotropic; hence $\mathbf{G}'=\mathrm{SL}_{\mathbf{2}}$ and $\Gamma$ is commensurable with $\mathrm{Res}_{K/\Q}\mathrm{SL}_2(\Z[i])$.  Now let $F=\{g_t:t\in\R\}$ be a one-parameter Ad-semisimple subgroup of $G$.  By the Jordan-Chevalley decomposition, $F=F_{\mathrm{diag}}F_{\mathrm{bdd}}$, where elements of $F_{\mathrm{diag}}$ are diagonalizable over $\C$ and those of $F_{\mathrm{bdd}}$ are bounded.  It follows that $E(F)=E(F_{\mathrm{diag}})$, and there are $h\in G$ and $\mathbf{r}$ for which $F_{\mathrm{diag}}=hF_{\mathbf{r}}h^{-1}$.  Therefore $E(F_{\mathrm{diag}})=hE(F_{\mathbf{r}})$, and by Proposition \ref{L214.223}(2) and Proposition \ref{P3.1}, $E(F)$ is HAW.
\end{proof}

\section{An application to badly approximable complex numbers}
As an application of our work, we study the analogue of the set $\mathbf{Bad}(\Q(\sqrt{-D}))$ for a {totally imaginary} number field $K$.  Set
\begin{equation}\label{badk}
 \mathbf{Bad}(K)=\left\{ z\in\C:\inf_{p,q\in\mathcal{O}_K,~q\neq 0} \left\{|q|\left|qz+p \right| \right\}>0 \right\}.
\end{equation}
This definition is equivalent to the one given in \eqref{1_Eq7.1}, as remarked in \cite[\S1]{KL}.

We first obtain a description of $\mathbf{Bad}(K,\mathbf{r})$ in terms of the set $\mathbf{Bad}(K)$ for a special vector $\mathbf{r}$.
\begin{lemma}\label{L7.1}
 Let $K$ be a totally imaginary number field of degree $n$ over $\Q$. Let $e_1=(1,0,\dots,0)\in\R^{n}$.  Then
 \[ \mathbf{Bad}(K,e_1)=\{(\sigma_1(z),\dots,\sigma_{n}(z))\in\C^{n}:z\in \mathbf{Bad}(K)\}, \]
 where $\sigma_j\in\Sigma$ are field embeddings of $K$ into $\C$ with $\sigma_1:z\mapsto z$. 
\end{lemma}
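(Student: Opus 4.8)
The plan is to compute $\mathbf{Bad}(K,e_1)$ directly from Definition~\ref{D3.2}. Specializing to $\mathbf{r}=e_1$ gives $\Sigma_+=\{\sigma_1\}$, $\Sigma_0=\{\sigma_2,\dots,\sigma_n\}$, $r_{\sigma_1}=1$ and $\|q\|_{\mathbf{r}}=|\sigma_1(q)|$ for all $q\in\mathcal{O}_K$, so that $\mathbf{z}=(z_1,\dots,z_n)\in\mathbf{Bad}(K,e_1)$ if and only if
\[
\inf_{\substack{p\in\mathcal{O}_K\\ q\in\mathcal{O}_K\setminus\{0\}}}\max\left\{\,|\sigma_1(q)|\,|\sigma_1(q)z_1+\sigma_1(p)|\;,\;\max_{2\le j\le n}\max\{\,|\sigma_j(q)z_j+\sigma_j(p)|\,,\,|\sigma_j(q)|\,\}\,\right\}>0 .
\]
Since $\max\{A,B\}\ge A$ and, after identifying $K$ with $\sigma_1(K)\subset\C$, the first entry $A=|q|\,|qz_1+p|$ is exactly the quantity governing membership in $\mathbf{Bad}(K)$ through~\eqref{badk}, the inclusion $\supseteq$ of the lemma is immediate: a vector whose coordinates come from the twisted diagonal over a point $z\in\mathbf{Bad}(K)$ has its infimum already bounded below by the $\Sigma_+$-entry alone. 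The content of the lemma is the reverse inclusion, where from $\mathbf{z}\in\mathbf{Bad}(K,e_1)$ one must extract, via the $\Sigma_0$-entries, that $\mathbf{z}$ lies on the twisted diagonal over a point of $\mathbf{Bad}(K)$.

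For the reverse inclusion I would argue by contraposition: if $\mathbf{z}$ is not of the asserted form, I would produce pairs $(p,q)$ making the whole maximum arbitrarily small. Taking the representative case $z_1\notin\mathbf{Bad}(K)$, there is a sequence $(p_k,q_k)\in\mathcal{O}_K\times(\mathcal{O}_K\setminus\{0\})$ with $|\sigma_1(q_k)|\,|\sigma_1(q_k)z_1+\sigma_1(p_k)|\to0$; I would then multiply $p_k,q_k$ by carefully chosen nonzero $m_k\in\mathcal{O}_K$ — units of prescribed size furnished by Dirichlet's unit theorem, or small elements furnished by Minkowski's theorem in $K$ — so that $|\sigma_j(m_k)|$ becomes tiny for all $j\ge2$ while $|\sigma_1(m_k)|$ grows only moderately. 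Using the product formula $\prod_\sigma|\sigma(m_k)|=|N(m_k)|\ge1$ to relate these sizes, and the discreteness of $\Theta(\mathcal{O}_K)$ in $\C^n$ (cf.~\eqref{Theta}) to keep the modified pairs honest, one forces every $\Sigma_0$-entry below any prescribed $\varepsilon$ while the $\sigma_1$-entry — which picks up only a bounded power of $|\sigma_1(m_k)|$ — still tends to $0$; this makes the maximum tend to $0$, contradicting $\mathbf{z}\in\mathbf{Bad}(K,e_1)$, and a parallel manipulation rules out the vectors off the twisted diagonal.

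The step I expect to be the main obstacle is the quantitative balancing inside this construction. A priori the $\Sigma_0$-data $\max_{j\ge2}\max\{|\sigma_j(q_k)z_j+\sigma_j(p_k)|,|\sigma_j(q_k)|\}$ attached to a good approximation $(p_k,q_k)$ of $z_1$ is unrelated to its approximation quality $|\sigma_1(q_k)|\,|\sigma_1(q_k)z_1+\sigma_1(p_k)|$, so one must either pass to approximations whose $\Sigma_0$-heights are polynomially bounded in terms of the quality, or absorb the discrepancy into $m_k$, and then verify that driving all $\Sigma_0$-coordinates below $\varepsilon$ costs at most a fixed power of $1/\varepsilon$ in the $\sigma_1$-direction. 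Pinning down these exponents — which is where the degree $n$, the regulator of $K$, and the covolume of $\Theta(\mathcal{O}_K)$ enter — is the technical heart; once it is in place, the two inclusions combine to give the stated identification of $\mathbf{Bad}(K,e_1)$.
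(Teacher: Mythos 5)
Your forward inclusion is correct and coincides with the easy half of the paper's argument: for $\mathbf{r}=e_1$ the $\Sigma_+$-entry of the maximum in Definition~\ref{D3.2} is exactly $|q|\,|qz_1+p|$, so $z_1\in\mathbf{Bad}(K)$ forces the infimum of the whole maximum to be positive. But notice what this observation actually proves: it shows $(z_1,z_2,\dots,z_n)\in\mathbf{Bad}(K,e_1)$ whenever $z_1\in\mathbf{Bad}(K)$, with \emph{no constraint at all} on $z_2,\dots,z_n$. Hence $\mathbf{Bad}(K)\times\C^{n-1}\subseteq\mathbf{Bad}(K,e_1)$, and the reverse inclusion you set out to prove --- that every point of $\mathbf{Bad}(K,e_1)$ lies on the twisted diagonal --- cannot hold for $n\ge 2$. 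This is also forced by dimension: Proposition~\ref{P3.5} makes $\mathbf{Bad}(K,e_1)$ HAW, hence of Hausdorff dimension $2n$, whereas the twisted diagonal is the image of $\C$ under a Lipschitz map and has dimension at most $2$. For $K=\Q(i)$ the failure is starker still: $|\sigma_2(q)|=|\overline{q}|=|q|\ge 1$ for every $q\in\Z[i]\setminus\{0\}$, so the maximum in Definition~\ref{D3.2} is always at least $1$ and $\mathbf{Bad}(\Q(i),e_1)=\C^2$. Your closing sentence, that ``a parallel manipulation rules out the vectors off the twisted diagonal,'' is therefore asking for a proof of a false statement; the most one can hope for is $\mathbf{Bad}(K,e_1)\subseteq\mathbf{Bad}(K)\times\C^{n-1}$, i.e.\ only the first coordinate is constrained, and that weaker inclusion is in fact all that the proof of Theorem~\ref{T7.1} uses.

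Even for that weaker inclusion your argument is a plan rather than a proof. You correctly see that one must take approximants $(p_k,q_k)$ witnessing $z_1\notin\mathbf{Bad}(K)$ and multiply them by $m_k\in\mathcal{O}_{K}$ with $|\sigma_j(m_k)|$ small for $j\ge 2$; the cost is a factor $|\sigma_1(m_k)|^2$ on the $\sigma_1$-entry, and whether $|\sigma_1(m_k)|^2\,|q_k|\,|q_kz_1+p_k|$ can still be driven to $0$ hinges on a quantitative comparison between the approximation quality and the sizes $|\sigma_j(q_k)|$, $|\sigma_j(p_k)|$ for $j\ge 2$ --- precisely the estimate you defer. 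That estimate is the entire content of the hard direction, and it is absent; note also that it genuinely fails when $\mathcal{O}_{K}^{\times}$ is finite ($n=2$), consistent with the $\Q(i)$ example above. For comparison, the paper's own proof is a three-line chain of equivalences restricted to points already on the twisted diagonal; the backward implication of its middle ``$\iff$'' is exactly the unproved quantitative step you flag, and off-diagonal vectors are never addressed. So your diagnosis of where the difficulty sits is accurate and more candid than the paper's write-up, but neither argument closes the gap, and the set-theoretic identity as stated in the lemma is not what either argument could establish.
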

\begin{proof}
 Let $\{b_1,b_2,\dots,b_{n}\}$ be the standard integral basis of $\mathcal{O}_K$ with $b_1=1$ so that each $z\in\C$ (respectively, $p\in K$) can be uniquely written as $z=\sum_{j=1}^n z_jb_j$ (respectively, $p=\sum_{j=1}^n p_jb_j$), where $z_j\in\R$ (respectively, $p_j\in\Q$).  Note that the embeddings $\sigma_j$ of $K\hookrightarrow\C$ are precisely of the form
 \[ \sigma_j(z) = \sum_{\underset{\scriptstyle{k\neq j}}{k=1}}^{n} z_kb_k + z_j\overline{b_j}, \]
 where $\overline{\phantom{O}}$ denotes the usual complex conjugation.  Consider $\mathbf{r}=e_1=(1,0,\dots,0)\in\R^{n}$.  Then, in the notation of \S\ref{S3}, we have $\Sigma_+=\{\sigma_1\}$ and $\Sigma_0=\{\sigma_2,\dots,\sigma_{n}\}$.  In view of \S6, \eqref{badk}, Definition \ref{D3.2} and Proposition \ref{P3.4}, we see that
 \begin{align*}
  z\in\mathbf{Bad}(K) &\iff \inf_{\overset{\scriptstyle p\in\mathcal{O}_{K}}{q\in\mathcal{O}_{K}\setminus\{0\}}}\{|q||qz+p|\}>0 \\
 &\iff \inf_{\overset{\scriptstyle p\in\mathcal{O}_{K}}{q\in\mathcal{O}_{K}\setminus\{0\}}}\max\{|q||qz+p|,\max_{2\leq j \leq n}\{|\sigma_j(q)\sigma_j(z)+\sigma_j(p)|,|\sigma_j(q)|\}\}>0 \\
  &\iff (\sigma_1(z),\dots,\sigma_{n}(z))\in\mathbf{Bad}(K,e_1).
 \end{align*}
\end{proof}
We are now set to obtain our main application using the above description.
\begin{theorem}\label{T7.1}
 Let $K$ be as in Lemma \ref{L7.1}.  Then $\mathbf{Bad}(K)$ has full Hausdorff dimension.
\end{theorem}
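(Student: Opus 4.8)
The plan is to deduce Theorem \ref{T7.1} from the weighted result of \S\ref{S3} together with the description of $\mathbf{Bad}(K)$ inside $\C^n$ supplied by Lemma \ref{L7.1}: the point is that $\mathbf{Bad}(K)$, suitably embedded, \emph{is} one of the sets $\mathbf{Bad}(K,\mathbf{r})$ already shown to be hyperplane-absolute-winning, and HAW sets have full Hausdorff dimension.

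Concretely, I would first fix the weight vector $\mathbf{r}=e_1=(1,0,\dots,0)$, which is admissible in the sense of Proposition \ref{P3.1} (its entries are nonnegative and sum to $1$). Proposition \ref{P3.1} then applies to the one-parameter flow $F_{e_1}$, and together with the Dani correspondence of Proposition \ref{P3.4} and the alternate formulation leading to Proposition \ref{P3.5} it yields that $\mathbf{Bad}(K,e_1)$ is hyperplane-absolute-winning. Lemma \ref{L7.1} identifies $\mathbf{Bad}(K,e_1)$ with the image of $\mathbf{Bad}(K)$ under the twisted diagonal embedding $\Psi_K\colon z\mapsto(\sigma_1(z),\dots,\sigma_n(z))$, where $\sigma_1$ is the identity embedding; equivalently, $\mathbf{Bad}(K)$ is recovered from $\mathbf{Bad}(K,e_1)$ by the holomorphic projection $\pi_1$ onto the first coordinate, and $\Psi_K$ is a right inverse of $\pi_1$ given by the explicit affine formulas in the proof of Lemma \ref{L7.1}.

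Next I would transfer the winning property from $\mathbf{Bad}(K,e_1)$ back to $\mathbf{Bad}(K)$. Since the hyperplane-absolute game in $\C^n$ is played with complex hyperplane neighbourhoods, the HAW property is inherited by intersections with complex-affine subspaces (iterate the codimension-one case); applying a biholomorphic change of chart as in Proposition \ref{L214.223} one straightens the slice $\Psi_K(\C)$ to such a subspace, so that $\Psi_K(\mathbf{Bad}(K))=\mathbf{Bad}(K,e_1)$ is HAW on $\Psi_K(\C)$, and pulling back through $\pi_1$ shows $\mathbf{Bad}(K)$ is HAW in $\C$. Lemma \ref{L2.2}(1) then gives that $\mathbf{Bad}(K)$ has full Hausdorff dimension as a subset of the complex manifold $\C$, which is the assertion of Theorem \ref{T7.1}. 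One may equally phrase the whole reduction by invoking Theorem \ref{T1.1_AGGL} directly, with $G=\prod_{\sigma\in\Sigma}\mathrm{SL}_2(\C)$, $\Gamma=\mathrm{SL}_2(\mathcal{O}_{K})$, and $F=F_{e_1}$, and then feeding the output through Proposition \ref{P3.4} and Lemma \ref{L7.1}.

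The step I expect to be the main obstacle is precisely this passage between the two games: one must make rigorous the claim that the twisted diagonal embedding $\Psi_K$ (or a biholomorphic modification of it) intertwines the hyperplane-absolute game in $\C^n$ with the one in $\C$ governing $\mathbf{Bad}(K)$ — in particular that the slice carrying $\mathbf{Bad}(K,e_1)$ can be taken to be an honest complex-affine subspace so that Player A's hyperplane moves restrict correctly, and that the set-theoretic identification of Lemma \ref{L7.1} is compatible with these restrictions. Once this bookkeeping is in place, the remaining inputs — admissibility of $e_1$, Proposition \ref{P3.4}, Proposition \ref{P3.5}, Proposition \ref{L214.223}, and Lemma \ref{L2.2}(1) — combine immediately.
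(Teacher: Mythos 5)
Your reduction to the weighted result and the role of Lemma \ref{L7.1} match the paper's strategy, but the step you yourself flag as ``the main obstacle'' is a genuine gap, and it cannot be repaired in the form you propose. First, the hyperplane-absolute-winning property is \emph{not} inherited by intersections with affine subspaces: the complement of a single complex hyperplane $\mathcal{H}$ is HAW in $\C^n$ (Player A repeatedly deletes a neighborhood of $\mathcal{H}$), yet its intersection with $\mathcal{H}$ is empty. There is no slicing theorem for winning sets, so ``$\mathbf{Bad}(K,e_1)$ is HAW in $\C^n$'' does not yield that it is winning on the slice $\Psi_K(\C)$. Second, the slice itself is not a complex submanifold: by the formula in the proof of Lemma \ref{L7.1}, each $\sigma_j$ is $\R$-linear but not $\C$-linear in $z$ (it involves the conjugate $\overline{b_j}$ of one basis vector), so $\Psi_K(\C)$ is a real $2$-plane in $\C^n\cong\R^{2n}$ that is in general not invariant under multiplication by $i$. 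No biholomorphism of $\C^n$ can straighten such a plane to a complex-affine line, so Proposition \ref{L214.223} — stated only for biholomorphisms — does not apply. Finally, the conclusion you aim for (that $\mathbf{Bad}(K)$ is HAW in $\C$) is strictly stronger than what Theorem \ref{T7.1} asserts and is not established by the ingredients available in the paper.

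The paper avoids all of this with a soft Hausdorff-dimension argument that you should substitute for your transfer step. Since $\sigma_1=\mathrm{id}$, Lemma \ref{L7.1} gives the containment $\mathbf{Bad}(K,e_1)\subset\mathbf{Bad}(K)\times\C^{n-1}$. The HAW property of $\mathbf{Bad}(K,e_1)$ (Proposition \ref{P3.5}, combined with Lemmas \ref{L2.1}(1) and \ref{L2.2}(1)) gives $\dim_H\mathbf{Bad}(K,e_1)=2n$. If one had $\dim_H\mathbf{Bad}(K)<2$, the product bound $\dim_H\bigl(A\times\C^{n-1}\bigr)\leq\dim_H A+2(n-1)$ would force $\dim_H\bigl(\mathbf{Bad}(K)\times\C^{n-1}\bigr)<2n$, contradicting the containment above. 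Hence $\dim_H\mathbf{Bad}(K)=2$. No game is ever played in $\C$ itself, and the delicate question of whether $\mathbf{Bad}(K)$ is winning in $\C$ is sidestepped entirely.
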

\begin{proof}
 By Proposition \ref{P3.5}, $\mathbf{Bad}(K,e_1)$ is HAW.  By Lemmas \ref{L2.1}(1) and \ref{L2.2}(1), this set is $\frac{1}{2}$-winning and has full Hausdorff dimension ($=2n$).  In view of Lemma \ref{L7.1}, note that $\mathbf{Bad}(K)=\Pi_1(\mathbf{Bad}(K,e_1))\subset\C$ is the natural projection in the first coordinate, so that
 \begin{equation}\label{hdim1}
\dim_H\mathbf{Bad}(K)\leq\dim_H\C=2.
\end{equation}
We claim equality in \eqref{hdim1}.  Indeed, if $\dim_H\mathbf{Bad}(K)<2$, then \[ \dim_H(\mathbf{Bad}(K)\times\C^{n-1})=\dim_H\mathbf{Bad}(K) + \dim_H\C^{n-1}<2+2(n-1)=2n. \]
But $\mathbf{Bad}(K,e_1)\subset\mathbf{Bad}(K)\times\C^{n-1}$, so that
\[ 2n=\dim_H\mathbf{Bad}(K,e_1)\leq\dim_H(\mathbf{Bad}(K)\times\C^{n-1})<2n, \]
a contradiction.  Thus, equality holds in \eqref{hdim1}; that is, $\mathbf{Bad}(K)$ has full Hausdorff dimension.
\end{proof}

\section*{Acknowledgements}
I sincerely thank Prof. Anish Ghosh for suggesting the question, for helpful discussions and for guidance throughout the course of this work.  I also thank Prof. Erez Nesharim and the anonymous referees for helpful comments.  One of the referees  of a previous version of this paper suggested a modification of the proof of Lemma \ref{L4.4} which facilitated proving Theorem \ref{T1.1_AGGL} and results in \S7 in the generality stated in this version.  This work is supported by CSIR-JRF grant 09/877(0014)/2019-EMR-I.

\end{document}